\title{A note on relative amenability}
\author[Phillip Wesolek]{Phillip Wesolek}
\address{MSCS University of Illinois at Chicago,
   322 Science and Engineering Offices MC249,
   851 S. Morgan St. Chicago, Il 60607-7045,
   USA}
\address{ Current address:
	Universit\'{e} catholique de Louvain,
	Institut de Recherche en Math\'{e}matiques et Physique (IRMP),
	Chemin du Cyclotron 2, box L7.01.02,
	1348 Louvain-la-Neuve, Belgium}
\email{phillip.wesolek@uclouvain.be}
\date{June 2014}
\keywords{Amenability, Relative Amenability, Elementary groups}
\subjclass[2010]{Primary 22D05}
\newtheorem{thm}{Theorem}[section]
\newtheorem{prop}[thm]{Proposition}
\newtheorem{lem}[thm]{Lemma}
\newtheorem{cor}[thm]{Corollary}
\theoremstyle{definition}
\newtheorem{defn}[thm]{Definition}
\newtheorem{rmk}[thm]{Remark}
\newtheorem{quest}[thm]{Question}
\newtheorem*{ack*}{Acknowledgments}
\newcommand{\U}{\mathcal{U}}
\newcommand{\N}{\mathbb{N}}
\newcommand{\normal}{\trianglelefteq}
\newcommand{\X}{\mathscr{X}}
\newcommand{\ms}[1]{\mathscr{#1}}
\newcommand{\Es}{\mathscr{E}^*}
\newcommand{\E}{\mathscr{E}}
\newcommand{\Y}{\mathscr{Y}}
\newcommand{\Ys}{\mathscr{Y}^*}
\newcommand{\Rad}{\mathrm{Rad}}
\newcommand{\sleq}{\leqslant}
\newcommand{\tdlc}{t.d.l.c.\@\xspace}
\newcommand{\tdlcsc}{t.d.l.c.s.c.\@\xspace}
\newcommand{\lc}{l.c.\@\xspace}
\newcommand{\rk}{\mathop{\rm rk}}
\newcommand{\ord}{\mathop{\rm ORD}}
\newcommand{\Res}[1]{\mathop{\rm Res}_{#1}\nolimits}
\begin{document}

\begin{abstract} 
P-E. Caprace and N. Monod isolate the class $\ms{X}$ of locally compact groups for which relatively amenable closed subgroups are amenable. It is unknown if $\X$ is closed under group extension. In this note, we exhibit a large, group extension stable subclass of $\X$, which suggests $\X$ is indeed closed under group extension. Along the way, we produce generalizations of the class of elementary groups and obtain information on groups outside $\X$.
\end{abstract}

\maketitle

\section{Introduction}
In \cite{CM14}, P-E. Caprace and N. Monod introduce the notion of relative amenability:

\begin{defn}
For a locally compact group $G$, a closed subgroup $H\sleq G$ is \textbf{relatively amenable} if $H$ fixes a point in every non-empty convex compact $G$-space.
\end{defn}
\noindent A convex compact $G$-space is a convex compact subset of a locally convex topological vector space such that the subset has a continuous affine $G$ action.

They go on to study the relationship between amenability and relative amenability. In particular, they isolate a large and interesting class of locally compact groups.

\begin{defn} 
The class $\X$ is the collection of locally compact groups for which every relatively amenable closed subgroup is amenable. 
\end{defn}

\begin{thm}[Caprace--Monod, {\cite[Theorem 2]{CM14}}]\label{thm:closure}
\
\begin{enumerate}[(a)]
\item $\X$ contains all discrete groups.
\item $\X$ contains all groups amenable at infinity.
\item $\X$ is closed under taking closed subgroups.
\item $\X$ is closed under taking (finite) direct products.
\item $\X$ is closed under taking adelic products.
\item $\X$ is closed under taking directed unions of open subgroups.
\end{enumerate}
Let $N\trianglelefteq G$ be a closed normal subgroup of a locally compact group $G$.
\begin{enumerate}[resume*]
\item If $N$ is amenable, then $G\in\X \Longleftrightarrow G/N\in\X$.
\item If $N$ is connected, then $G\in\X \Longleftrightarrow G/N\in\X$.
\item If $N$ is open, then $G\in\X \Longleftrightarrow N\in\X$.
\item If $N$ is discrete and $G/N\in\X$, then $G\in\X$.
\item If $N$ is amenable at infinity and $G/N\in\X$, then $G\in\X$.
\end{enumerate}
\end{thm}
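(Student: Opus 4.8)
The plan is to phrase everything through the \emph{universal convex compact $G$-space} $\mathcal{U}_G$, which one may take to be the space of means on the left-uniformly-continuous bounded functions on $G$, with the weak-$*$ topology and the left-translation action. This is a convex compact $G$-space, and the barycentre construction makes it universal: for any convex compact $G$-space $X$ and any $x_0\in X$ there is a continuous affine $G$-map $\mathcal{U}_G\to X$. Hence a closed subgroup $H\sleq G$ is relatively amenable exactly when $\mathcal{U}_G^H\neq\emptyset$, while $H$ is amenable exactly when $\mathcal{U}_H^H\neq\emptyset$. Each assertion of the theorem then amounts to transporting a non-empty fixed-point set from one of these spaces to another, using three ingredients: (1) the functoriality above; (2) the standard permanence facts for amenability of locally compact groups---closed subgroups, quotients by closed normal subgroups, and extensions of an amenable group by an amenable group are amenable, and a locally compact group is amenable iff every compactly generated closed subgroup is; and (3) a few genuinely new inputs, indicated below.

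The first block is essentially formal. For (a), with $G$ discrete $\mathcal{U}_G=\mathcal{M}(\ell^\infty(G))$; a transversal of $H\backslash G$ gives an $H$-equivariant positive unital map $\ell^\infty(H)\to\ell^\infty(G)$ whose adjoint carries an $H$-invariant mean to an $H$-invariant mean, so relative amenability implies amenability. For (b), an amenable action $G\curvearrowright Z$ on a compact space makes $\mathrm{Prob}(Z)$ a convex compact $G$-space; a relatively amenable $H$ fixes some $\mu$, the amenable action restricts to $H$, and Zimmer's theorem (an amenable action preserving a probability measure has amenable acting group) gives $H$ amenable. For (c), restricting the $G$-action on $\mathcal{U}_G$ to $L$ and applying universality of $\mathcal{U}_L$ gives an $L$-map $\mathcal{U}_L\to\mathcal{U}_G$ carrying $\mathcal{U}_L^H$ into $\mathcal{U}_G^H$; so $H$ relatively amenable in $L$ is relatively amenable in $G$, hence amenable. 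For (d), pushing a relatively amenable $H\sleq G_1\times G_2$ through the two coordinate projections $p_i$ shows $\overline{p_i(H)}$ is relatively amenable in $G_i$, hence amenable, so $H$ lies in the amenable group $\overline{p_1(H)}\times\overline{p_2(H)}$.

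The open-subgroup statements rely on an induction lemma: for $O\sleq G$ open and $X$ a convex compact $O$-space, $\mathrm{Ind}_O^G X=\{f\colon G\to X\mid f(go)=o^{-1}f(g)\}$, identified with $X^{G/O}$ by a transversal, is a convex compact $G$-space (only the discreteness of $G/O$ is needed for continuity of the action). If $H\sleq G$ is relatively amenable it fixes some $f\in\mathrm{Ind}_O^G X$, and reading the equivariance and invariance relations at the identity shows $f(e)$ is fixed by $H\cap O$; thus $H\cap O$ is relatively amenable in $O$. Now (f) follows: if $G=\bigcup_\alpha O_\alpha$ then each $H\cap O_\alpha$ is relatively amenable in $O_\alpha\in\X$, hence amenable and open in $H$, and directedness puts any compact generating set of a compactly generated closed subgroup of $H$ into a single $O_\alpha$, so $H$ is amenable. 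Statement (i) is the same argument with $O=N$, together with (a): the discrete quotient $G/N$ lies in $\X$, $HN/N$ is relatively amenable in it and hence amenable, and $H$ is an extension of $HN/N$ by the amenable group $H\cap N$. Finally (e) reduces to (d) and (f), since an adelic product is the directed union over finite $F$ of the open subgroups $\prod_{i\in F}G_i\times\prod_{i\notin F}U_i$, each a finite direct product of the $G_i\in\X$ with a compact group, and compact groups lie in $\X$ because their closed subgroups are amenable.

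The normal-subgroup statements are the crux; write $\pi\colon G\to G/N$ for the quotient map. For (g), when $N$ is amenable the set $\mathcal{U}_G^N$ is non-empty and, with $N$ acting trivially, is a convex compact $G/N$-space; universality links $\mathcal{U}_{G/N}$ with $\mathcal{U}_G^N$, and chasing fixed-point sets through these maps together with the amenable-by-amenable permanence gives both directions. For (h) one first replaces $N$ by $N/\Rad(N)$ and $G$ by $G/\Rad(N)$: the amenable radical of $N$ is characteristic in $N$, hence normal in $G$, and (g) reduces to this case, so one may assume $N$ is a centre-free connected semisimple Lie group with no compact factors. For such $N$ the outer automorphism group is discrete, so the kernel $C_G(N)N$ of the induced map $G\to\mathrm{Out}(N)$ is open in $G$; as $N$ is centre-free, $C_G(N)N=C_G(N)\times N$, and $C_G(N)\cong C_G(N)N/N$ is an open normal subgroup of $G/N$. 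Since $N\in\X$ by (b) and closed subgroups of $G$ inherit $\X$ by (c), statements (c), (d), (i) now yield $G\in\X\iff C_G(N)\in\X\iff G/N\in\X$. For (k), pushing down as in (g) shows $\overline{\pi(H)}$ is relatively amenable in $G/N\in\X$, hence amenable, so it is enough to see that $H\cap N$ is amenable; here one uses that $N$, being amenable at infinity, carries an amenable action on a compact space that---by normality of $N$---may be promoted to a $G$-action, whence $H$ fixes an invariant probability measure on it and Zimmer's theorem forces $H\cap N$ amenable; amenability of $H$ then follows from the permanence properties. Statement (j) runs along the same lines, the remaining point being that $H\cap N$ is a discrete (hence countable, in the second-countable setting) closed normal subgroup of $H$ that is relatively amenable in $G$, whose amenability is obtained from the structure of discrete normal subgroups---centralisers of finite subsets are open---together with (a). I expect the real obstacle to be exactly this control of $H\cap N$ in (j) and (k): relative amenability does not visibly pass to a closed subgroup that is not open, so the hypothesis on $N$ must be used to reach $H\cap N$ by a detour, and it is this same gap that leaves the general extension problem open.
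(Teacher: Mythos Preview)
This theorem is not proved in the paper under review; it is quoted from Caprace--Monod \cite[Theorem~2]{CM14} and used as a black box. There is therefore no proof here to compare your proposal against.

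As a standalone sketch your framework is the standard one---working with means on $\mathrm{LUC}(G)$ as the universal convex compact $G$-space---and parts (a)--(f), (i) and the outline of (g), (h) are along the correct lines. The genuine gaps are exactly where you locate them. In (k), the step ``the amenable $N$-action on a compact $Z$ may be promoted to a $G$-action by normality of $N$'' is unsupported: the compact model space $Z$ witnessing amenability at infinity of $N$ carries no evident $G$-action, and normality of $N$ alone does not manufacture one. One has to work instead with a $G$-space built functorially from $N$ (for instance the means on $\mathrm{LUC}(N)$, on which $G$ acts through conjugation on $N$) and then argue that amenability at infinity of $N$ makes the relevant fixed-point set tractable. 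In (j), the passage from relative amenability of $H$ to amenability of $H\cap N$ is not justified: relative amenability is not known to pass to closed non-open subgroups, and the phrase ``centralisers of finite subsets are open, together with (a)'' is a gesture rather than an argument---it does not by itself force the discrete group $H\cap N$ to be amenable. These two points are precisely the substantive content of the corresponding items in \cite{CM14}, and your proposal does not yet supply them.
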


Two questions concerning the class $\X$ arise:
\begin{quest}[Caprace--Monod]\label{quest:ext} 
\
\begin{enumerate}[(1)]
\item Is $\X$ stable under group extension?
\item Are there locally compact groups outside of $\X$?
\end{enumerate}
\end{quest}

Our note contributes primarily to the study of the former. Indeed, let $\mathscr{Y}$ be the smallest collection of locally compact groups such that
\begin{enumerate}[(1)]
\item $\Y$ contains all compact groups, discrete groups, and connected groups,

\item $\Y$ is closed under group extensions, and

\item $\Y$ is closed under directed unions of open subgroups. That is to say, if $G=\bigcup_{i\in I}O_i$ where $\{O_i\}_{i\in I}$ is a directed system of open subgroups of $G$ such that $O_i\in \Y$ for each $i$, then $G\in \Y$.

\end{enumerate}

\noindent We prove the following: 

\begin{thm}\label{thm:Y}
The class $\Y$ is contained in $\X$ and enjoys the following additional permanence properties: 
\begin{enumerate}[(a)]
\item $\Y$ is closed under taking closed subgroups.

\item $\Y$ is closed under taking quotients by closed normal subgroups. 
\end{enumerate}
\end{thm}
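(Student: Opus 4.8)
The plan is to carry over to $\Y$ the machinery of decomposition rank from the theory of elementary totally disconnected locally compact groups. Stratify $\Y$ as $\Y=\bigcup_{\alpha}\Y_{\alpha}$, where $\Y_{0}$ consists of all compact, discrete, and connected locally compact groups and, for $\alpha>0$, $\Y_{\alpha}$ is $\bigcup_{\beta<\alpha}\Y_{\beta}$ together with all extensions $1\to N\to G\to Q\to 1$ with $N,Q\in\bigcup_{\beta<\alpha}\Y_{\beta}$ and all directed unions of open subgroups drawn from $\bigcup_{\beta<\alpha}\Y_{\beta}$; a routine argument shows $\Y=\bigcup_{\alpha}\Y_{\alpha}$, and we write $\rk(G)$ for the least $\alpha$ with $G\in\Y_{\alpha}$. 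Observe at the outset that the three generating families already lie in $\X$: compact groups are amenable at infinity and so lie in $\X$ by Theorem~\ref{thm:closure}(b); discrete groups lie in $\X$ by Theorem~\ref{thm:closure}(a); and a connected group $G$ lies in $\X$ by applying Theorem~\ref{thm:closure}(h) to $N=G$. As $\X$ is also closed under directed unions of open subgroups (Theorem~\ref{thm:closure}(f)), the entire difficulty in the inclusion $\Y\subseteq\X$ lies in the extension step, which is precisely Question~\ref{quest:ext}(1) and is open in general; the role of the rank is to let us bypass it.

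I would first prove (a) and (b) by transfinite induction on $\rk(G)$: closed subgroups and quotients by closed normal subgroups of members of $\Y$ again lie in $\Y$. The only new ingredient is the base case. Quotients of connected groups are connected; and if $H$ is a closed subgroup of a connected group $G$, then $H^{0}$ is connected while $H/H^{0}$ is totally disconnected and, since $G$ is pro-Lie by the solution to Hilbert's fifth problem, residually discrete. A residually discrete totally disconnected locally compact group is the directed union of its compactly generated open subgroups, each of which --- being compactly generated and residually discrete --- has a compact open normal subgroup by a theorem of Caprace and Monod, hence is compact-by-discrete and lies in $\Y$; therefore $H/H^{0}\in\Y$ and $H\in\Y$. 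The successor step is routine for directed unions (intersection with a closed subgroup and image under a quotient map commute with directed unions of open subgroups) and, for an extension $1\to N\to G\to Q\to1$, proceeds by coring a closed normal subgroup of $G$ down into $N$, or replacing it by the closure of its product with $N$, and applying the inductive hypothesis to $N$ and $Q$; the work lies in checking, exactly as in the elementary case, that none of this raises the rank.

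With (a) and (b) available, I would deduce $\Y\subseteq\X$ by comparing $\Y$ with the smallest class $\Y'$ that contains $\Y_{0}$, is closed under directed unions of open subgroups, and is closed under extensions whose kernel belongs to $\Y_{0}$. Granting that $\Y=\Y'$, the inclusion $\Y\subseteq\X$ is immediate by transfinite induction on the rank of a member of $\Y'$: at an extension layer the kernel is compact, discrete, or connected, so Theorem~\ref{thm:closure}(g), (j), or (h) applies, and at a directed-union layer Theorem~\ref{thm:closure}(f) applies; none of these invocations requires closure of $\X$ under general extensions, so the reasoning is not circular with Question~\ref{quest:ext}(1). The substance is therefore the equality $\Y=\Y'$, equivalently that $\Y'$ is closed under arbitrary extensions: given $1\to N\to G\to Q\to1$ with $N,Q\in\Y'$, one inducts on the rank of $N$, using (a) and (b) to core the $\Y_{0}$-layers of $N$ down to subgroups normal in $G$ while keeping the intermediate quotients inside $\Y'$, the delicate case being when $N$ is itself a directed union of open subgroups, which forces one to rebuild $G$ as a suitable directed union of open subgroups.

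The main obstacle is the bookkeeping of the decomposition rank. One must verify that closed subgroups and quotients do not raise the rank in (a), (b), and that the reorganization of an extension into compact, discrete, and connected layers in the proof of $\Y=\Y'$ strictly lowers the rank at each stage; the delicate point is to reconcile a normal subgroup presented as a directed union of open subgroups with the requirement of peeling off a single $\Y_{0}$ layer. A secondary obstacle is that the totally disconnected tools --- compactly generated open subgroups, discrete residuals, the Caprace--Monod structure theorem for residually discrete groups --- must be supplemented at the connected base case by the Gleason--Yamabe structure theory of connected locally compact groups. Once the rank is controlled, every inductive step closes using one of the permanence properties (f)--(j) of Theorem~\ref{thm:closure}.
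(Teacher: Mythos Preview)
Your plan diverges from the paper and has a real gap at the step $\Y=\Y'$. The paper never inducts on a construction rank for $\Y$; instead it introduces $\Ys:=\{G:G/G^\circ\in\Es\}$, where $\Es$ is the non--second-countable analogue of the elementary class $\E$, proves directly that $\Ys$ is closed under extensions, directed unions, closed subgroups, and quotients (Proposition~\ref{prop:Ys}), and concludes $\Y=\Ys\subseteq\X$ via Theorem~\ref{thm:closure}(h) together with $\Es\subseteq\X$. The key device is that $G^\circ$ is \emph{characteristic}, hence automatically normal in any overgroup; all of the rank bookkeeping is thereby pushed into the totally disconnected world of $\Es$ and, via Kakutani--Kodaira, ultimately into the second-countable class $\E$ where the machinery of \cite{W_1_14} applies.

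This is exactly where your argument breaks. To show $\Y'$ is closed under arbitrary extensions you propose, given $1\to N\to G\to Q\to 1$ with $N,Q\in\Y'$, to peel a $\Y_0$-layer $M\trianglelefteq N$ off of $N$ and pass to $G/M$. But $M$ need not be normal in $G$: for instance $M$ could be a compact open subgroup of a totally disconnected $N$ whose $G$-normal core is trivial. Properties (a) and (b), even if already in hand, do not manufacture normality, so the induction on $\rk(N)$ does not close; the paper's passage to $G/G^\circ$ is precisely the canonical characteristic first layer that your ``coring down'' is missing. A related issue already surfaces in your inductive proof of (a): for closed $H\leq G$ with $G$ an extension of $N$ by $Q$, the quotient $H/(H\cap N)$ only admits a continuous injection into $Q$ and need not be a closed subgroup (since $HN$ need not be closed), so the inductive hypothesis on $Q$ does not apply directly. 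The paper handles the analogous problem for $\Es$ by proving the stronger closure under continuous injective homomorphisms (Theorem~\ref{thm:es_closure}(b)), which again relies on the reduction to $\E$.
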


This provides evidence the class $\X$ may be stable under group extensions. Furthermore, it gives information on how groups $G \not \in \X$ look like; see Remark~\rm\ref{rem:counterexample} below. 

\begin{ack*}
The author would like to thank Pierre-Emmanuel Caprace for his many helpful suggestions and for supporting a week research stay at the Universit\'{e} catholique de Louvain during which this note was partially developed. This note was further developed during a visit to the Fields Institute; the author thanks the Institute for its hospitality.
\end{ack*}

\section{Generalities on locally compact groups}

All groups are taken to be Hausdorff topological groups. We abbreviate ``locally compact" by ``l.c.", ``totally disconnected" by ``t.d.", and ``second countable" by ``s.c.". We write $H\sleq _oG$ to indicate $H$ is an open subgroup of $G$. We denote by $\U(G)$ the collection of compact open subgroups of $G$. 

As its statement is not the obvious generalization, we recall the first isomorphism theorem for locally compact groups (see \cite[(5.33)]{HR79}):

\medskip 
\textit{
Let $G$ be a locally group, $A\sleq G$ be closed, and $H\trianglelefteq G$ be closed. If $A$ is $\sigma$-compact  and $AH$ is closed, then $AH/H\simeq A/A\cap H$ as topological groups.}

\medskip 
Locally compact $\sigma$-compact groups are close to being second countable.

\begin{thm}[{Kakutani--Kodaira, see \cite[(8.7)]{HR79}}]\label{KK}
If $G$ is a $\sigma$-compact \tdlc group, then there is a compact $K\trianglelefteq G$ such that $G/K$ is metrizable, hence second countable. 
\end{thm}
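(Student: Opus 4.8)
The plan is to exploit van Dantzig's theorem together with $\sigma$-compactness to manufacture $K$ as the intersection of a carefully chosen decreasing chain of compact open subgroups. First I would fix a compact open subgroup $U_0\in\U(G)$, which exists by van Dantzig's theorem since $G$ is \tdlc. Using $\sigma$-compactness, I write $G=\bigcup_{n\in\N}C_n$ where each $C_n$ is compact, $C_n\subseteq C_{n+1}$, and $C_n=C_n^{-1}$ contains the identity. The technical engine of the argument is the observation that for any compact open subgroup $V\sleq G$ and any compact set $C$, the family $\{gVg^{-1}:g\in C\}$ is \emph{finite}: the normalizer $N_G(V)$ is open, since it contains the open subgroup $V$; the map $g\mapsto gVg^{-1}$ is constant on left cosets of $N_G(V)$; and $C$ meets only finitely many such cosets by compactness. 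Consequently $\bigcap_{g\in C}gVg^{-1}$ is a finite intersection of compact open subgroups, hence again a compact open subgroup.

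With this in hand I would recursively build compact open subgroups $U_0\sgeq U_1\sgeq U_2\sgeq\cdots$ by setting $U_{n+1}:=\bigcap_{g\in C_n}gU_ng^{-1}$, which is legitimate by the previous paragraph. Put $K:=\bigcap_{n\in\N}U_n$. Being a closed subgroup of the compact group $U_0$, the group $K$ is compact. For normality, fix $g\in G$; then $g\in C_n$ for all sufficiently large $n$, and since the $C_m$ increase we get $U_{m+1}\sleq gU_mg^{-1}$, i.e. $g^{-1}U_{m+1}g\sleq U_m$, for all large $m$. Intersecting over $m$ yields $g^{-1}Kg\sleq K$, and applying the same reasoning to $g^{-1}$ (here I use $C_n=C_n^{-1}$) gives the reverse inclusion, so $K\normal G$.

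It remains to see that $G/K$ is metrizable. The images $U_n/K$ form a decreasing sequence of compact open subgroups of $G/K$ with $\bigcap_n U_n/K=\{eK\}$, and a finite-intersection-property argument inside the compact group $U_0/K$ shows that every identity neighborhood contains some $U_n/K$; thus $\{U_n/K\}_{n\in\N}$ is a countable neighborhood basis at the identity. By the Birkhoff--Kakutani theorem a first countable Hausdorff topological group is metrizable, so $G/K$ is metrizable; being also $\sigma$-compact, it is separable and hence second countable. The main obstacle is arranging $K$ to be normal in all of $G$ rather than merely in $U_0$: the naive choice $K=\bigcap_{g\in G}gU_0g^{-1}$ (the core of $U_0$) is normal but can fail to yield a second countable quotient, and it is precisely the $\sigma$-compactness-driven conjugation condition $U_{n+1}\sleq\bigcap_{g\in C_n}gU_ng^{-1}$ that reconciles normality with the shrinking needed for a countable neighborhood basis.
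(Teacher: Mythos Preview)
The paper does not give its own proof of this statement: it is quoted as the classical Kakutani--Kodaira theorem with a reference to Hewitt--Ross, so there is nothing in the paper to compare against beyond the citation. Your argument is correct and is essentially the standard proof, specialised to the totally disconnected case where van Dantzig's theorem replaces the harder structural input needed for general locally compact groups. The key points---finiteness of $\{gVg^{-1}:g\in C\}$ via openness of $N_G(V)$, the recursive construction $U_{n+1}=\bigcap_{g\in C_n}gU_ng^{-1}$, normality of $K=\bigcap_n U_n$ from the exhaustion $G=\bigcup_n C_n$, and first countability of $G/K$ via a compactness argument inside $U_0/K$---are all handled correctly. One small remark: your last sentence about ``the naive choice $K=\bigcap_{g\in G}gU_0g^{-1}$'' is a useful heuristic comment but not part of the proof proper; the argument stands without it.
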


A \tdlc group $G$ is said to be a \textbf{small invariant neighborhood group}, denoted SIN, if $G$ admits a basis at $1$ of compact open normal subgroups. These groups when compactly generated admit a useful characterization.

\begin{thm}[Caprace--Monod, {\cite[Corollary~4.1]{CM11}}]\label{CM_SIN}
A compactly generated \tdlc group is SIN if and only if it is residually discrete.
\end{thm}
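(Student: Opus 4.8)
The plan is to treat the two implications separately, the forward one being routine and the converse carrying all the content. For the forward direction, if $G$ is SIN then the compact open normal subgroups form a neighbourhood basis at $1$, and since $G$ is Hausdorff their intersection is trivial; a fortiori the open normal subgroups intersect trivially, so $G$ is residually discrete, with no use of compact generation. For the converse I would first invoke van Dantzig's theorem to fix a compact open subgroup $U\in\U(G)$ and reduce the whole statement to the assertion that \emph{every} such $U$ contains a compact open normal subgroup of $G$. This reduction suffices for SIN: the members of $\U(G)$ form a neighbourhood basis at $1$, so shrinking $U$ produces arbitrarily small compact open normal subgroups.

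The first ingredient is a compactness observation that does not itself need compact generation. Writing $\mathcal N$ for the family of open normal subgroups of $G$, residual discreteness gives $\bigcap_{N\in\mathcal N}(N\cap U)=U\cap\bigcap_{N\in\mathcal N}N=\{1\}$. Each $N\cap U$ is an open, hence closed, finite-index subgroup of the compact group $U$, and $\mathcal N$ is closed under finite intersections. Thus for any open $W\ni 1$ the closed subsets $\{(N\cap U)\setminus W\}_{N\in\mathcal N}$ of the compact space $U$ have empty total intersection, so by the finite intersection property a single $N\in\mathcal N$ already satisfies $N\cap U\subseteq W$. In other words, $\{N\cap U:N\in\mathcal N\}$ is a neighbourhood basis at $1$ in $U$.

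The decisive step is to choose the target $W$ so that the resulting trace $N\cap U$ is normalised not merely by $U$ but by all of $G$. I would fix a symmetric compact generating set $S$ with $1\in S$ and $\grp{S}=G$ (enlarging $S$ by a compact open subgroup makes $\grp{S}$ open, hence equal to $G$), and take $W:=U_1:=\bigcap_{s\in S}sUs^{-1}$. Here compact generation enters decisively: applying the tube lemma to the continuous map $(x,s)\mapsto s^{-1}xs$ on $G\times S$, which carries $\{1\}\times S$ into the open set $U$, shows that $U_1$ is an open (compact) subgroup of $U$. Choosing $N\in\mathcal N$ with $N\cap U\subseteq U_1$, one computes for $s\in S$ that $s(N\cap U)s^{-1}=N\cap sUs^{-1}$ by normality of $N$, and since $U_1\subseteq s^{-1}Us$ by construction this yields $s(N\cap U)s^{-1}\subseteq N\cap sU_1s^{-1}\subseteq N\cap U$. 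Applying the same to $s^{-1}\in S$ gives equality $s(N\cap U)s^{-1}=N\cap U$ for every $s\in S$, so $N\cap U$ is normalised by $\grp{S}=G$. Hence $N\cap U$ is a compact open normal subgroup of $G$ contained in $U$, finishing the reduction.

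I expect the main obstacle to be locating the correct target neighbourhood rather than any individual computation: an arbitrary small $W$ only produces a subgroup normalised by $U$, and it is the specific passage to $U_1=\bigcap_{s\in S}sUs^{-1}$ that upgrades this to normality in $G$. This is also exactly the point at which the compact generation hypothesis is indispensable, since the openness of $U_1$ rests on the compactness of $S$; without it, residual discreteness alone does not force SIN.
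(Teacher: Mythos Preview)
The paper does not supply a proof of this statement: it is quoted from \cite[Corollary~4.1]{CM11} and used as a black box throughout. There is therefore no proof in the present paper to compare your argument against.

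That said, your argument is correct. The forward direction is immediate, as you note. For the converse, the two-stage strategy---first using compactness of $U$ and residual discreteness to arrange $N\cap U$ inside any prescribed neighbourhood of $1$, then choosing that neighbourhood to be $U_1=\bigcap_{s\in S}sUs^{-1}$ so that $N\cap U$ becomes $S$-invariant---is the standard route and is carried out cleanly. The only place where the exposition is slightly compressed is the chain
\[
s(N\cap U)s^{-1}\subseteq N\cap sU_1 s^{-1}\subseteq N\cap U:
\]
the first inclusion combines $N\cap U\subseteq U_1$ with normality of $N$, and the second uses $sU_1 s^{-1}\subseteq U$, which holds because $s^{-1}\in S$ (by symmetry of $S$) forces $U_1\subseteq s^{-1}Us$. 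With that unpacked, the proof is complete.
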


The \textbf{discrete residual} of a \tdlc group $G$, denoted $\Res{}(G)$, is the intersection of all open normal subgroups. When $G$ is compactly generated, Theorem~\rm\ref{CM_SIN} implies $G/\Res{}(G)$ is a SIN group.

Groups ``built by hand" from profinite and discrete groups often appear when studying \tdlc groups. The class of elementary groups captures the intuitive idea of such groups.
\begin{defn}
The class of \textbf{elementary groups} is the smallest class $\E$ of \tdlcsc groups such that
\begin{enumerate}[(1)]

\item $\E$ contains all second countable profinite groups and countable discrete groups.

\item $\E$ is closed under taking group extensions of second countable profinite or countable discrete groups. I.e. if $G$ is a \tdlcsc group and $H\trianglelefteq G$ is a closed normal subgroup with $H\in \E$ and $G/H$ profinite or discrete, then $G\in \E$.

\item If $G$ is a \tdlcsc group and $G=\bigcup_{i\in \N}O_i$ where $(O_i)_{i\in \N}$ is an $\subseteq$-increasing sequence of open subgroups of $G$ with $O_i\in\E$ for each $i$, then $G\in\E$. We say $\E$ is \textbf{closed under countable increasing unions}.
\end{enumerate}
\end{defn}

The class $\E$ enjoys robust permanence properties, which supports the thesis $\E$ is exactly the groups ``built by hand."

\begin{thm}[{\cite[Theorem 3.18, Theorem 5.7]{W_1_14}}]\label{thm:closure_main}
$\E$ enjoys the following permanence properties:
\begin{enumerate}[(a)]
\item $\E$ is closed under group extension.

\item If $G\in \E$, $H$ is a \tdlcsc group, and $\psi:H\rightarrow G$ is a continuous, injective homomorphism, then $H\in \E$. In particular, $\E$ is closed under taking closed subgroups.

\item $\E$ is closed under taking quotients by closed normal subgroups.

\item If $G\in \E$, $H$ is a \tdlcsc group, and $\psi:G\rightarrow H$ is a continuous, injective homomorphism with dense normal image, then $H\in \E$.
\end{enumerate}
\end{thm}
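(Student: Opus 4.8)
The engine of the proof is a transfinite \emph{decomposition rank} on elementary groups. I would first record the two structural facts that drive every induction: every \tdlcsc group $G$ is a countable increasing union $G=\bigcup_n O_n$ of compactly generated open subgroups, and for a compactly generated $O\in\E$ the quotient $O/\Res{}(O)$ is SIN by Theorem~\ref{CM_SIN}, hence—being a compactly generated SIN group—an extension of a profinite group by a discrete group, and therefore a member of $\E$ by clause~(2) of the definition. Using the discrete residual I would define a rank $\xi\colon\E\to[1,\omega_1)$ by $\xi(\{1\})=1$ and, for $G\neq\{1\}$ with exhaustion $(O_n)_n$, $\xi(G)=\sup_n\big(\xi(\Res{}(O_n))+1\big)$. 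The crux of this preliminary stage is a lemma, proved by induction along the generating process for $\E$, asserting simultaneously that $\Res{}(O)\in\E$ with $\xi(\Res{}(O))<\xi(O)$ for every compactly generated $O\in\E$, and that $\xi$ does not depend on the chosen exhaustion; this is what makes the rank well defined and the later inductions well founded. The delicate point is a potential circularity: $\Res{}(O)$ is a closed subgroup of $O$, so asserting $\Res{}(O)\in\E$ looks as though it presupposes part~(b). Grounding this lemma in the \emph{construction} of $\E$ rather than in the subgroup-closure statement is where most of the care goes, and it is the main obstacle of the whole argument.

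With the rank in hand I would prove (b) and (c) by a simultaneous transfinite induction on $\xi$. For a continuous injection $\psi\colon H\to G$ or a quotient map $\pi\colon G\to G/N$, the sets $\psi^{-1}(O_n)$ (respectively the open images $\pi(O_n)$) exhaust the target by open subgroups, so closure under increasing unions reduces everything to a compactly generated $O\in\E$. There I would pass to $R=\Res{}(O)$, of strictly smaller rank: for (c) the first isomorphism theorem (applicable since $R$ is $\sigma$-compact) identifies the relevant image of $R$ with a quotient of $R$, one checks this governs the discrete residual of $O/N'$ where $N'=N\cap O$, and concludes that $\Res{}(O/N')$ is elementary of smaller rank by the inductive hypothesis; for (b) one restricts $\psi$ to $\psi^{-1}(R)$ and invokes the inductive hypothesis on $R$. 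In both cases the target is then an extension of an elementary group of smaller rank by the residually discrete—hence profinite-by-discrete—group $O/R$, and two applications of clause~(2) finish the step without appealing to the full extension theorem. Kakutani–Kodaira (Theorem~\ref{KK}) and the first isomorphism theorem supply the $\sigma$-compactness and closedness of products needed to keep these images closed.

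Closure under extensions (a) I would prove separately, by induction on $\xi(G/N)$, now freely using (b). Writing $G/N=\bigcup_n Q_n$ and pulling back along $G\to G/N$ reduces to a compactly generated quotient; peeling off $\Res{}(Q_n)$ produces a normal subgroup whose quotient is profinite-by-discrete and a sub-extension of $N$ by $\Res{}(Q_n)$, the latter elementary of smaller rank by (b), so the inductive hypothesis and clause~(2) reassemble $G$. For the dense-normal-image statement (d) I would run the same decomposition-rank induction, exploiting that density of $\psi(G)$ in $H$ transfers discrete-residual computations across $\psi$: for compactly generated open $U\sleq_o H$ the subgroup $U\cap\psi(G)$ is dense in $U$ and $\psi^{-1}(U)$ is open and elementary in $G$, which bounds $\Res{}(U)$ by the closure of a lower-rank elementary image. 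Once the foundational rank lemma is secured—so that the discrete residual strictly decreases rank and stays inside $\E$—parts (a)–(d) become disciplined bookkeeping with the discrete residual, the first isomorphism theorem, and Theorem~\ref{KK}.
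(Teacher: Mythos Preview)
The paper does not prove this theorem at all: it is quoted verbatim from \cite[Theorem 3.18, Theorem 5.7]{W_1_14}, with no argument supplied here. So there is no ``paper's own proof'' to compare against; what you have written is an attempted reconstruction of the proofs in \cite{W_1_14}.

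As a reconstruction, your outline is broadly faithful to the strategy of \cite{W_1_14}: a rank function, transfinite induction, and systematic use of the discrete residual together with Theorem~\ref{CM_SIN}. You have also correctly located the main difficulty, namely the apparent circularity between the well-definedness of $\xi$ (which requires $\Res{}(O)\in\E$) and the subgroup closure property~(b). In \cite{W_1_14} this is resolved exactly as you gesture at---by first introducing a \emph{construction rank} tied to the inductive definition of $\E$ (the analogue of the rank $\rk$ defined for $\Es$ in the present paper), proving the permanence properties by induction on that, and only afterwards defining the decomposition rank $\xi$ and showing it is well defined. Your plan to ``ground the lemma in the construction of $\E$'' is therefore the right move, but be aware that once you do this the construction rank already suffices to run all of (a)--(c), so routing the main arguments through $\xi$ is a stylistic choice rather than a necessity.

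One technical point to watch in your sketch of (b): when you pass from $O$ to $R=\Res{}(O)$ and consider $\psi^{-1}(O)/\psi^{-1}(R)$, this quotient injects continuously into the SIN group $O/R$ and is therefore residually discrete, but it need not itself be compactly generated, so ``two applications of clause~(2)'' does not apply directly. You must first exhaust $\psi^{-1}(O)$ by compactly generated open subgroups and argue for each of those; this is routine but should be said.
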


The class $\E$ admits a canonical rank function:

\begin{defn}
The \textbf{decomposition rank} $\xi:\E\rightarrow [1,\omega_1)$ is an ordinal-valued function satisfying the following properties:
	\begin{enumerate}[(a)]
		\item $\xi(G)=1$ if and only if $G \simeq \{1\}$;
		
		\item If $G \in \E$ is non-trivial and $G=\bigcup_{i\in \N}O_i$ with $(O_i)_{i\in \N}$ some $\subseteq$-increasing sequence of compactly generated open subgroups of $G$, then 
		\[
		\xi(G)=\sup_{i\in \N}\xi(\Res{}(O_i))+1.
		\]
	\end{enumerate}
\end{defn}

By \cite[Theorem 4.7]{W_1_14} and \cite[Lemma 4.10]{W_1_14}, such a function $\xi$ exists, is unique, and is equivalent to the decomposition rank as it is defined in \cite{W_1_14}. 

The decomposition rank has a useful permanence property.

\begin{lem}[{\cite[Lemma 2.9]{RW_LC_16}}]\label{lem:compact_ext}
Suppose $G$ is a \tdlcsc group and $N$ is a closed cocompact normal subgroup of $G$. If $N\in \E$, then $G\in \E$ with $\xi(G)=\xi(N)$.
\end{lem}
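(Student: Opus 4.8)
The plan is to treat membership in $\E$ and the rank equality separately, proving the latter by reducing first to the case where $G$ is compactly generated and then to a statement about discrete residuals.

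First I would verify $G\in\E$. Since $N$ is closed and $G$ is \tdlcsc, so is $G/N$; picking $U\in\U(G)$, the image $UN/N\cong U/(U\cap N)$ is a profinite open subgroup of $G/N$, so $G/N$ is totally disconnected, and it is compact because $N$ is cocompact. Thus $G/N$ is a second countable profinite group, whence $G\in\E$ by clause~(2) of the definition of $\E$ (equivalently Theorem~\ref{thm:closure_main}(a)). We may assume $N$, and hence $G$, is non-trivial. As $\xi$ is monotone under passage to closed subgroups --- a standard property of the decomposition rank from \cite{W_1_14} --- we already have $\xi(N)\le\xi(G)$, so it remains to prove $\xi(G)\le\xi(N)$.

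To reduce to the compactly generated case, write $G=\bigcup_{i\in\N}O_i$ with $(O_i)$ a $\subseteq$-increasing sequence of compactly generated open subgroups. For each $i$ the subgroup $O_i\cap N$ is normal in $O_i$, and $O_i/(O_i\cap N)\cong O_iN/N$ embeds as a closed subgroup of the profinite group $G/N$, hence is profinite; so $O_i\cap N$ is cocompact in $O_i$, therefore compactly generated, and it is an open subgroup of $N$ with $\bigcup_i(O_i\cap N)=N$. Feeding the filtrations $(O_i)$ of $G$ and $(O_i\cap N)$ of $N$ into the recursive formula for $\xi$, the desired inequality --- in fact equality --- will follow from the \emph{key claim}: if $H$ is a compactly generated \tdlcsc group and $M\normal H$ is a closed cocompact normal subgroup, then $\Res{}(H)=\Res{}(M)$. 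Indeed, taking $H=O_i$ and $M=O_i\cap N$ we get $\Res{}(O_i)=\Res{}(O_i\cap N)$, hence $\xi(\Res{}(O_i))=\xi(\Res{}(O_i\cap N))$, and so $\xi(G)=\sup_i\xi(\Res{}(O_i))+1=\sup_i\xi(\Res{}(O_i\cap N))+1=\xi(N)$.

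I expect the key claim to be the main obstacle. The inclusions $\Res{}(M)\subseteq\Res{}(H)\subseteq M$ are routine (each open normal subgroup of $H$ meets $M$ in an open normal subgroup of $M$; and the image of $\Res{}(H)$ in the profinite group $H/M$ lies in $\Res{}(H/M)=\{1\}$), so the point is $\Res{}(H)\subseteq\Res{}(M)$. I would pass to $\bar H:=H/\Res{}(M)$, in which $\bar M:=M/\Res{}(M)$ is a cocompact normal subgroup that is SIN (being residually discrete, by Theorem~\ref{CM_SIN}, as $M$ is compactly generated); since $\Res{}(M)$ lies in every open normal subgroup of $H$ one has $\Res{}(\bar H)=\Res{}(H)/\Res{}(M)$, so it suffices to show $\bar H$ is residually discrete --- i.e.\ that a compactly generated \tdlcsc group with a cocompact SIN normal subgroup is SIN. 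Here conjugation gives a continuous homomorphism $\bar H\to\operatorname{Aut}(\bar M)$, and $\{\phi:\phi(V)=V\}$ is open in $\operatorname{Aut}(\bar M)$ for $V$ compact open in $\bar M$; hence the normaliser in $\bar H$ of a compact open $V\normal\bar M$ is open, and it has finite index since it contains the cocompact $\bar M$, so $V$ has finitely many $\bar H$-conjugates and its $\bar H$-core is again compact and open in $\bar M$ and normal in $\bar H$. Quotienting by such cores reduces us to the case where $\bar M$ is discrete --- finitely generated, as $\bar H$ is compactly generated --- in which $\operatorname{Aut}(\bar M)$ is discrete, $C_{\bar H}(\bar M)$ is open and normal in $\bar H$, and $\Res{}(\bar H)\subseteq C_{\bar H}(\bar M)\cap\bar M=Z(\bar M)$. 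The remaining step --- that a central element of $\bar M$ in $\Res{}(\bar H)$ must be trivial --- is where I expect the real care to be needed: using that $\bar H/\bar M$ acts on $Z(\bar M)$ with finite orbits one reduces to a finitely generated $\bar H$-invariant abelian normal subgroup, on which the image of $\bar H/\bar M$ in $\operatorname{Aut}$ is profinite, hence finite; passing to the (finite-index) kernel one may assume the element is central in $\bar H$, and an element of prime order there is eliminated directly. It is quite possible that a form of the key claim is already recorded in the literature on the discrete residual (e.g.\ in \cite{CM11}), in which case this step may simply be quoted.
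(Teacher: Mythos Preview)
The paper does not prove this lemma; it is quoted from \cite[Lemma~2.9]{RW_LC_16} without argument, so there is no proof in the present text to compare your attempt against.

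On its own merits, your strategy is sound: reducing to the identity $\Res{}(O_i)=\Res{}(O_i\cap N)$ for compactly generated open $O_i\sleq G$ and then reading off $\xi(G)=\xi(N)$ from the recursive formula is a natural and correct route. The key claim is also true, but the endgame of your sketch is both incomplete and unnecessarily indirect. Once you have reduced to $\bar M$ discrete and observed that $C_{\bar H}(\bar M)$ is open (because $\bar M$ is finitely generated), the same ``finitely many conjugates'' device you already used finishes the job immediately: any compact open $V\sleq C_{\bar H}(\bar M)$ is centralised by $\bar M$, so $N_{\bar H}(V)\supseteq V\bar M$ has finite index in $\bar H$, and the core $\bigcap_{h}hVh^{-1}$ is a compact open \emph{normal} subgroup of $\bar H$ contained in $V$. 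Hence $\bar H$ is SIN. Your alternative path through $Z(\bar M)$, finite orbits, and the assertion that ``an element of prime order there is eliminated directly'' leaves that last assertion unjustified; it is not clear how to complete it without essentially reverting to the argument just described.

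A minor caveat: the lemma as stated fails in the edge case $N=\{1\}$ with $G$ nontrivial profinite (then $\xi(N)=1$ but $\xi(G)=2$). Your sentence ``We may assume $N$, and hence $G$, is non-trivial'' does not resolve this, since $N$ trivial does not force $G$ trivial. This is harmless for the paper's application but worth flagging.
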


\section{The class $\Es$}

We first argue $\E$ is a subclass of $\X$. To this end, we require a lemma due to Caprace and Monod.

\begin{lem}[{\cite[Lemma 13]{CM14}}]\label{lem:grp_ext}
Suppose $G$ is a locally compact group, $H\trianglelefteq G$ is closed, and $O\sleq_o G$ contains $H$. If $O\in \X$ and $G/H\in \X$, then $G\in \X$.
\end{lem}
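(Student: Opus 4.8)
The plan is to show that every closed relatively amenable subgroup $L\sleq G$ is amenable; this does not seem to need reducing $G$ or $L$ to the $\sigma$-compact or second countable case. I would extract two consequences of the relative amenability of $L$, one using $O\in\X$ and one using $G/H\in\X$, and then combine them.

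The first consequence is that $L\cap O$ is amenable. Given a convex compact $O$-space $X$, coinduce it to the convex compact $G$-space $\mathrm{Ind}_O^G X=\{f\colon G\to X\mid f(go)=o^{-1}f(g)\text{ for all }o\in O\}$; since $O$ is open, $G/O$ is discrete, a choice of coset representatives identifies this space with a product of copies of $X$ on which $G$ acts by translation, and continuity of this action reduces coordinatewise to the induced permutation of $G/O$ being locally constant, which holds because $O$ is open. If $L$ fixes $f\in\mathrm{Ind}_O^G X$ then $f(1)\in X$ is fixed by $L\cap O$, so $L\cap O$ is relatively amenable in $O$ and hence amenable; in particular $L\cap H=(L\cap O)\cap H$ is an amenable (and normal) subgroup of $L$. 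The second consequence is that $\ol{LH}/H$ is amenable: pulling a convex compact $G/H$-space back along $G\to G/H$ gives a convex compact $G$-space on which $H$, hence also $\ol{LH}$, fixes any point fixed by $L$ (the pointwise stabiliser is a closed subgroup containing $L$ and $H$); this point descends to a point fixed by the closed subgroup $\ol{LH}/H$ of $G/H$, which is thus relatively amenable in $G/H$ and therefore amenable.

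To combine, set $P:=\ol{LH}\cap O$, a clopen subgroup of $\ol{LH}$ containing $H$, with $L\cap P=L\cap O$ since $L\sleq\ol{LH}$. As $P$ is clopen and $LP$ is a union of cosets of $P$ inside $\ol{LH}$ containing the dense subgroup $LH$, we get $LP=\ol{LH}$; hence $\ell(L\cap O)\mapsto \ell P$ is a well-defined $L$-equivariant bijection from the discrete $L$-set $L/(L\cap O)$ onto $\ol{LH}/P$. Since $P\sgeq H$ acts trivially, $\ol{LH}/P$ is really a transitive action of the amenable group $\ol{LH}/H$, so $\ell^\infty(\ol{LH}/P)$ admits an $\ol{LH}$-invariant mean (amenable groups carry invariant means on $\ell^\infty$ of their transitive actions); transporting it across the bijection yields an $L$-invariant mean on $\ell^\infty(L/(L\cap O))$. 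Thus the open subgroup $L\cap O$, being moreover amenable, is co-amenable in $L$, so $L$ is amenable: average a bounded function on $L$ over $L\cap O$ with the latter's invariant mean, then apply the invariant mean on $L/(L\cap O)$, to obtain a left-invariant mean on $L^\infty(L)$ (equivalently, amenability of locally compact groups passes to extensions).

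The last paragraph is entirely formal. The step needing the most care is the production of the two consequences, and within it chiefly the verification that $\mathrm{Ind}_O^G X$ is genuinely a convex compact $G$-space — continuity of the $G$-action being the delicate point — together with the small bookkeeping that $P=\ol{LH}\cap O$ meets every one of its own cosets in $\ol{LH}$ in an element of $L$, which is exactly what lets $L/(L\cap O)$ be viewed as a homogeneous space of $\ol{LH}/H$.
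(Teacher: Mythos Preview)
The paper does not prove this lemma; it is quoted verbatim from \cite[Lemma~13]{CM14} and used as a black box. So there is no in-paper argument to compare against, and your write-up would in fact supply a proof where the paper gives none.

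Your argument is correct. The two ``consequences'' are exactly the right extractions: coinduction $\mathrm{Ind}_O^G X \cong X^{G/O}$ is a genuine non-empty convex compact $G$-space because $O$ is open (so $G/O$ is discrete, Tychonoff gives compactness, and continuity of the $G$-action reduces to continuity of the $O$-action on $X$ together with local constancy of the coset map), and evaluation at $1$ sends an $L$-fixed point to an $(L\cap O)$-fixed point; the pullback along $G\to G/H$ is routine and yields amenability of $\overline{LH}/H$. The combination step is also sound: with $P=\overline{LH}\cap O$ one has $H\leq P$, $P$ clopen in $\overline{LH}$, $LP=\overline{LH}$ (open set containing the dense subset $LH$), hence an $L$-equivariant bijection $L/(L\cap O)\to \overline{LH}/P$; amenability of $\overline{LH}/H$ gives an invariant mean on $\ell^\infty\big((\overline{LH}/H)/(P/H)\big)=\ell^\infty(\overline{LH}/P)$, which transports to an $L$-invariant mean on $\ell^\infty(L/(L\cap O))$. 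Since $L\cap O$ is open and amenable in $L$, $L$ is amenable.

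Two minor remarks. The observation that $L\cap H$ is amenable and normal in $L$ is never used and can be dropped. And in the final sentence you invoke both the explicit mean construction and ``amenability passes to extensions''; either one suffices, so you might streamline by keeping just one.
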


\begin{thm}\label{key lem}
$\E\subseteq \X$.  
\end{thm}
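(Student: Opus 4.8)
The plan is to prove by induction on the decomposition rank $\xi(G)$ that every $G\in\E$ belongs to $\X$. The base case is $\xi(G)=1$, i.e.\ $G$ trivial, which is amenable and hence in $\X$. For the inductive step I would fix $G\in\E$ with $\xi(G)=\alpha>1$, assume the claim for every elementary group of rank $<\alpha$, and --- using that $G$ is $\sigma$-compact --- write $G=\bigcup_{i\in\N}O_i$ as an increasing union of compactly generated open subgroups. Since $\X$ is closed under directed unions of open subgroups (Theorem~\ref{thm:closure}(f)), it suffices to show $O_i\in\X$ for each $i$.

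Fix $i$ and set $R:=\Res{}(O_i)$. First I would observe that $O_i$, being a closed subgroup of $G\in\E$, is itself elementary by Theorem~\ref{thm:closure_main}(b), so $R$ is elementary too, and that the defining formula for $\xi$ applied to the decomposition $G=\bigcup_j O_j$ forces $\xi(R)\le\sup_j\xi(\Res{}(O_j))<\xi(G)=\alpha$. Since $O_i$ is compactly generated, $O_i/R$ is a SIN group (Theorem~\ref{CM_SIN} and the remark following it); I would choose a compact open normal subgroup $W\trianglelefteq O_i/R$ and let $V\sleq O_i$ be its preimage under $O_i\to O_i/R$. Then $V$ is an open normal subgroup of $O_i$ with $R\le V$ and $V/R\cong W$ compact. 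Granting $V\in\X$, Theorem~\ref{thm:closure}(i) gives $O_i\in\X$, and the argument is complete.

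Thus everything reduces to showing $V\in\X$, that is, to climbing the compact extension $R\trianglelefteq V$. If $R$ is trivial, then $V\cong W$ is compact, hence amenable at infinity, hence in $\X$ by Theorem~\ref{thm:closure}(b). If $R$ is nontrivial, I would apply Lemma~\ref{lem:compact_ext}: $R$ is a closed cocompact normal subgroup of $V$ lying in $\E$, so $V\in\E$ with $\xi(V)=\xi(R)<\alpha$, and the inductive hypothesis yields $V\in\X$. This compact-extension step is the only point where the Caprace--Monod permanence list (Theorem~\ref{thm:closure}) does not directly apply --- it handles amenable, connected, open, discrete, and amenable-at-infinity normal subgroups, but not an arbitrary member of $\X$ with compact quotient --- and it is precisely what Lemma~\ref{lem:compact_ext} is designed to handle, since it shows such an extension stays within $\E$ without raising the decomposition rank, so the induction can absorb it. I expect this to be the only genuinely delicate part; the rest is a routine assembly of the permanence properties of $\X$ with the structure theory of compactly generated \tdlc groups.
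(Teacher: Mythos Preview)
Your proof is correct and follows essentially the same strategy as the paper: induct on the decomposition rank, reduce to the compactly generated pieces $O_i$, use the SIN structure of $O_i/\Res{}(O_i)$, and invoke Lemma~\ref{lem:compact_ext} to keep the rank of the compact-by-$\Res{}(O_i)$ open subgroup within the inductive range. The only cosmetic difference is that you take $V$ to be the preimage of a compact open \emph{normal} subgroup of the SIN quotient and then apply Theorem~\ref{thm:closure}(i), whereas the paper takes an arbitrary compact open $V\le O_i$, forms $V\Res{}(O_i)$, and appeals to Lemma~\ref{lem:grp_ext}; your explicit case split on whether $R$ is trivial also cleanly sidesteps the degenerate instance of Lemma~\ref{lem:compact_ext}.
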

\begin{proof}We argue by induction on $\xi(H)$ for $H\in \X$. As the base case is obvious, suppose $\xi(H)=\beta+1$ and write $H=\bigcup_{i\in \N}O_i$ with $(O_i)_{i\in \N}$ an inclusion increasing sequence of compactly generated open subgroups of $H$.
	
Fixing $i\in \N$, the quotient $O_i/\Res{}(O_i)$ is a SIN group via Theorem~\ref{CM_SIN}. It is therefore compact-by-discrete, and Theorem~\ref{thm:closure} implies it is in $\X$. On the other hand, the definition of the decomposition rank gives that $\xi(\Res{}(O_i))\leq \beta$, so taking a compact open subgroup $V\leq O_i$, the group $V\Res{}(O_i)$ is elementary with rank at most $\beta $ via Lemma~\ref{lem:compact_ext}. Applying the induction hypothesis, we deduce that $V\Res{}(O_i)\in \X$. Lemma~\ref{lem:grp_ext} now ensures that $O_i\in \X$, and since $\X$ is closed under direct limits, we conclude that $H\in \X$, completing the induction.
\end{proof}

We next relax the second countability assumption on $\mathscr{E}$ by introducing the following class:

\begin{defn}
The class $\mathscr{E}^*$ is the smallest collection of \tdlc groups such that
\begin{enumerate}[(1)]
\item $\Es$ contains all profinite groups and discrete groups,

\item $\Es$ is closed under group extensions of profinite and discrete groups, and

\item $\Es$ is closed under directed unions of open subgroups.

\end{enumerate}
\end{defn}
\noindent There is an ordinal rank on $\Es$. For $G\in \mathscr{E}$, define

\begin{enumerate}[$\bullet$]
\item $G\in \Es_0$ if and only if $G$ is profinite or discrete.

\item Suppose $\Es_{\alpha}$ is defined. Put $G\in (\Es)_{\alpha}^e$ if and only if there exists $N\trianglelefteq G$ such that $N\in \Es_{\alpha}$ and $G/N\in \Es_{0}$. Put $G\in (\Es)_{\alpha}^l$ if and only if $G=\bigcup_{i\in I}H_i$ where $(H_i)_{i\in I}$ is an $\subseteq$-directed set of open subgroups of $G$ and $H_i\in \Es_{\alpha}$ for each $i\in I$. Define $\Es_{\alpha+1}:=(\Es)_{\alpha}^e\cup (\Es)_{\alpha}^l$.

\item For $\lambda$ a limit ordinal, $\Es_{\lambda}:=\bigcup_{\beta<\lambda}\Es_{\beta}$.
\end{enumerate}

Certainly, $\Es=\bigcup_{\alpha\in \ord} \Es_{\alpha}$, so for $G\in \Es$, we define
\[
\rk(G):=\min\{\alpha\in\ord\;|\;G\in \Es_{\alpha}\}.
\]
We call $\rk(G)$ the \textbf{construction rank} of $G$.

The construction rank has a number of nice properties.
\begin{lem} Let $G\in \Es$. Then
\begin{enumerate}[(a)]
\item $\rk(G)$ is a successor ordinal when $\rk(G)$ is non-zero.

\item If $G$ is compactly generated and has non-zero rank, then $\rk(G)$ is given by a group extension. I.e.\ if $\rk(G)=\beta+1$, there is $H\trianglelefteq G$ such that $\rk(H)=\beta$ and $\rk(G/H)=0$.

\item If $O\sleq_oG$, then $O\in \Es$ and $\rk(O)\sleq \rk(G)$.
\end{enumerate}
\end{lem}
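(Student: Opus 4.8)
**The plan is to prove the three items by transfinite induction on the construction rank, using the structure of the definition of $\Es_\alpha$.**

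For part (a), the plan is to induct on $\alpha$ and show: if $G \in \Es_\alpha$ with $\alpha$ a limit ordinal, then in fact $G \in \Es_\beta$ for some $\beta < \alpha$, so $\rk(G) < \alpha$; and if $G \in \Es_{\alpha+1}$ then either $\rk(G) \leq \alpha$ already or $\rk(G) = \alpha + 1$. The limit case is immediate from $\Es_\lambda = \bigcup_{\beta < \lambda}\Es_\beta$. For the successor case, $G \in \Es_{\alpha+1} = (\Es)_\alpha^e \cup (\Es)_\alpha^l$; either way $G$ is visibly a member of $\Es_{\alpha+1}$, and if $\alpha$ is a limit ordinal one uses the inductive description of the constituents ($N \in \Es_\alpha$ or each $H_i \in \Es_\alpha$) together with the limit case to push $G$ down to some $\Es_{\beta+1}$ with $\beta + 1 \leq \alpha$; if $\alpha$ is itself a successor or zero there is nothing further to do. So $\rk(G)$ is never a nonzero limit, i.e.\ it is a successor when nonzero.

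For part (c), the plan is again transfinite induction on $\rk(G)$. If $\rk(G) = 0$ then $G$ is profinite or discrete, and an open subgroup of a profinite group is profinite while an open subgroup of a discrete group is discrete, so $\rk(O) = 0$. For the successor step $\rk(G) = \beta + 1$, split on whether $G \in (\Es)_\beta^e$ or $G \in (\Es)_\beta^l$. In the extension case, let $N \trianglelefteq G$ with $N \in \Es_\beta$ and $G/N \in \Es_0$; then $O \cap N \trianglelefteq O$ is open in $N$, so by induction $O\cap N \in \Es$ with $\rk(O\cap N) \leq \beta$, and $O/(O\cap N) \cong ON/N$ is an open subgroup of $G/N$, hence profinite or discrete, hence in $\Es_0$; therefore $O \in (\Es)_\beta^e \subseteq \Es_{\beta+1}$. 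In the directed-union case $G = \bigcup_{i \in I} H_i$ with $H_i \sleq_o G$ and $H_i \in \Es_\beta$; then $O = \bigcup_{i\in I}(O \cap H_i)$ is a directed union of open subgroups, each open in $H_i$, hence in $\Es$ with rank $\leq \beta$ by induction, so $O \in (\Es)_\beta^l \subseteq \Es_{\beta+1}$. In all cases $\rk(O) \leq \beta + 1 = \rk(G)$.

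For part (b): assuming $G$ is compactly generated with $\rk(G) = \beta + 1$, we want the rank witnessed by an extension rather than a directed union. By definition $G \in (\Es)_\beta^e \cup (\Es)_\beta^l$. If $G \in (\Es)_\beta^e$ we are done, so suppose $G \in (\Es)_\beta^l$, say $G = \bigcup_{i\in I}H_i$ with $H_i \sleq_o G$, $H_i \in \Es_\beta$. Since $G$ is compactly generated, a compact generating set lies in finitely many $H_i$, and by directedness there is a single $i_0$ with $G = H_{i_0} \in \Es_\beta$, so $\rk(G) \leq \beta < \beta + 1$, contradicting $\rk(G) = \beta + 1$. Hence the directed-union case cannot occur, and $G \in (\Es)_\beta^e$, giving the desired $H \trianglelefteq G$ with $\rk(H) \leq \beta$; combined with part (c)-type monotonicity (or directly, since $\rk(G) = \beta + 1$ forces $\rk(H) = \beta$, else $G$ would already lie in $\Es_\beta$) we get $\rk(H) = \beta$ and $\rk(G/H) = 0$. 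The one delicate point to check is that a compact generating set is absorbed into a single $H_{i_0}$: this uses that each $H_i$ is open (hence the $H_i$ form an open cover of the compact generating set) plus directedness of the system. I expect this absorption argument in part (b)—and the bookkeeping of which constituent realizes the minimal rank in parts (a) and (c)—to be the only real content; everything else is the first isomorphism theorem for l.c.\ groups and the observation that open subgroups of profinite (resp.\ discrete) groups are profinite (resp.\ discrete).
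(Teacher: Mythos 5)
Your proposal is correct and matches the paper's intent: the paper proves all three items simply by "transfinite induction on $\rk(G)$," and your argument (limit stages are unions so the rank is never a nonzero limit, the open-subgroup induction splitting on the extension versus directed-union cases, and the absorption of a compact generating set into a single member of a directed system of open subgroups) is exactly the standard way to carry that induction out. The details you flag as delicate — the topological isomorphism $O/(O\cap N)\simeq ON/N$ for $O$ open and the forcing of $\rk(H)=\beta$ by minimality of $\rk(G)$ — are handled correctly.
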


\begin{proof} 
These follow by transfinite induction on $\rk(G)$.
\end{proof}

\begin{prop}\label{Es E}
Let $G\in \Es$ be  $\sigma$-compact. If $K\trianglelefteq G$ is compact with $G/K$ second countable, then $G/K\in \E$.
\end{prop}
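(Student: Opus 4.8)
The plan is to argue by transfinite induction on $\rk(G)$, allowing $K$ to vary over all admissible compact normal subgroups at each stage. Two preliminary remarks streamline the argument. First, $G/K$ is always \tdlcsc: picking $U\in\U(G)$, the subgroup $UK$ is compact (a continuous image of $U\times K$) and open in $G$, so $UK/K\simeq U/(U\cap K)$ is a profinite compact open subgroup of $G/K$, whence $G/K$ is t.d.l.c., and it is second countable by hypothesis. Second, every open (hence closed) subgroup of $G$ is again $\sigma$-compact, since a closed subgroup of a $\sigma$-compact locally compact group is $\sigma$-compact. I will also use the first isomorphism theorem repeatedly in the form $LK/K\simeq L/(L\cap K)$ for closed $\sigma$-compact $L\sleq G$, which is legitimate because $K$ compact forces $LK$ closed.

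For the base case $\rk(G)=0$, the group $G$ is profinite or discrete, so $G/K$, being a second countable quotient, is a second countable profinite group or a countable discrete group, and thus $G/K\in\E$. For the inductive step, write $\rk(G)=\beta+1$. If $G\in(\Es)_\beta^e$, fix $N\normal G$ with $N\in\Es_\beta$ and $G/N$ profinite or discrete. Then $NK\normal G$ is closed, and its image $NK/K$ is a closed normal subgroup of $G/K$ with $NK/K\simeq N/(N\cap K)$; the latter is second countable as a subgroup of $G/K$, has the compact normal subgroup $N\cap K$, and satisfies $\rk(N)\leq\beta$, so the induction hypothesis gives $NK/K\in\E$. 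Moreover $(G/K)/(NK/K)\simeq G/NK\simeq(G/N)/(NK/N)$ is a Hausdorff quotient of a profinite or discrete group, hence profinite or discrete. As $G/K$ is \tdlcsc, the defining closure of $\E$ under extensions of profinite or discrete groups yields $G/K\in\E$.

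The remaining case, $G\in(\Es)_\beta^l$, carries the real content. Writing $G=\bigcup_{i\in I}H_i$ with $(H_i)_{i\in I}$ an $\subseteq$-directed family of open subgroups each lying in $\Es_\beta$, the induction hypothesis applies to every $H_i$ (it is $\sigma$-compact, $H_i\cap K$ is compact normal in it, $H_iK/K\simeq H_i/(H_i\cap K)$ is second countable, and $\rk(H_i)\leq\beta$), giving $H_iK/K\in\E$, while $G/K=\bigcup_{i\in I}H_iK/K$. The obstacle is that $\E$ is closed only under \emph{countable increasing} unions of open subgroups, not arbitrary directed ones. I would circumvent this by using that $G/K$ is second countable, hence Lindel\"of: the open cover $\{H_iK/K\}_{i\in I}$ admits a countable subcover $\{H_{i_n}K/K\}_{n\in\N}$, and by directedness of $I$ one builds an $\leq$-increasing sequence $(j_n)_{n\in\N}$ in $I$ with $i_n\leq j_n$ for all $n$, so that $(H_{j_n}K/K)_{n\in\N}$ is an $\subseteq$-increasing sequence of open subgroups of $G/K$, each in $\E$, with union $G/K$. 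Closure of $\E$ under countable increasing unions then completes the induction. I expect this extraction of a countable cofinal chain --- together with the bookkeeping of $\sigma$-compactness needed to license the isomorphism theorems --- to be the only genuine subtlety; the rest is a direct appeal to the closure properties of $\E$ recorded above.
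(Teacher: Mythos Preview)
Your proof is correct and follows essentially the same route as the paper's: transfinite induction on $\rk(G)$, handling the extension case via $NK/K\simeq N/(N\cap K)$ and the directed-union case by passing to a countable increasing chain using the Lindel\"of property of $G/K$. You supply a bit more detail than the paper (the explicit cofinal-sequence construction, the check that $G/K$ is \tdlcsc), but the argument is the same.
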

\begin{proof} We induct on the construction rank of $G$. As the proposition is immediate if $\rk(G)=0$, suppose $\rk(G)=\alpha +1$. 

Suppose $\rk(G)$ is given by a directed union, so there is $(O_i)_{i\in I}$ a directed system of open subgroups of $G$ such that $G=\bigcup_{i\in I}O_i$ with $\rk(O_i)\sleq \alpha$ for each $i\in I$. Certainly, 
\[
G/K=\bigcup_{i\in I}O_iK/K,
\]
and we may find a countable subcover $(O_iK/K)_{i\in \N}$ since $G/K$ is Lindel\"{o}f. One checks this cover may be taken to be an increasing $\subseteq$-chain. Further, $O_iK/K\simeq O_i/(O_i\cap K)$, $\rk(O_i)\sleq \alpha$, and $O_i\cap K$ is a compact normal subgroup of $O_i$ whose quotient is second countable. The induction hypothesis implies $O_i/(O_i\cap K)\in \E$, and as $G/K$ is the countable increasing union of $(O_iK/K)_{i\in \N}$, we conclude $G/K\in \E$.

Suppose the construction rank of $G$ is given by a group extension; say $H\trianglelefteq G$ is such that $\rk(H)=\alpha$ and $\rk(G/H)=0$. Since $H$ is $\sigma$-compact and $K$ compact, we have $HK/K\simeq H/(H\cap K)$, and as $\rk(H)= \alpha$, the induction hypothesis implies $H/(H\cap K)\in \E$. On the other hand, $HK/H\trianglelefteq G/H$ is closed, and the quotient 
\[
(G/H)/(HK/H)\simeq G/HK\simeq (G/K)/(HK/K)
\] 
is second countable and either discrete or compact. We conclude $G/K$ is a group extension of $G/HK$ by $HK/K$, hence $G/K\in \E$. This completes the induction, and we conclude the proposition.
\end{proof}

\begin{thm}\label{thm:es_x} 
$\Es\subseteq \X$. 
\end{thm}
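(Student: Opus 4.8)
The plan is to reduce $\Es\subseteq\X$ to the already-established facts $\E\subseteq\X$ (Theorem~\ref{key lem}) and Proposition~\ref{Es E}, using the permanence properties of $\X$ from Theorem~\ref{thm:closure} to bridge the gap between the general locally compact setting and the second countable setting. First I would reduce to the $\sigma$-compact case: if $G\in\Es$ and $H\sleq G$ is a relatively amenable closed subgroup, then $H$ is generated by a compactly generated (hence $\sigma$-compact) open subgroup together with countably many elements; more carefully, since relative amenability is witnessed by fixed points in convex compact $G$-spaces, one checks that $H$ is amenable iff every $\sigma$-compact closed subgroup of $H$ is amenable, and every such subgroup sits inside a $\sigma$-compact open subgroup $O\sleq_o G$, which lies in $\Es$ by part~(c) of the rank lemma. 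Since $\X$ is closed under taking closed subgroups (Theorem~\ref{thm:closure}(c)), it suffices to prove $O\in\X$ for every $\sigma$-compact open subgroup $O\in\Es$; i.e.\ we may assume $G$ itself is $\sigma$-compact.

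Next, assuming $G\in\Es$ is $\sigma$-compact, I would apply Kakutani--Kodaira (Theorem~\ref{KK}) to obtain a compact normal subgroup $K\trianglelefteq G$ with $G/K$ second countable. By Proposition~\ref{Es E}, $G/K\in\E$, and hence $G/K\in\X$ by Theorem~\ref{key lem}. Since $K$ is compact, it is amenable, so Theorem~\ref{thm:closure}(g) gives $G\in\X\Longleftrightarrow G/K\in\X$, whence $G\in\X$. Combined with the reduction in the previous paragraph, this yields $\Es\subseteq\X$.

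The main obstacle I expect is the very first reduction step: making precise that membership in $\X$ for a possibly non-$\sigma$-compact group $G$ can be tested on $\sigma$-compact open subgroups. The cleanest route is to observe that $\X$ is closed under directed unions of open subgroups (Theorem~\ref{thm:closure}(f)): writing $G=\bigcup_{i}O_i$ as the directed union of its $\sigma$-compact (equivalently, compactly generated) open subgroups, if each $O_i\in\X$ then $G\in\X$. Each $O_i$ is open in $G$, hence $O_i\in\Es$ with $\rk(O_i)\sleq\rk(G)$ by the rank lemma, and $O_i$ is $\sigma$-compact, so the $\sigma$-compact case handles it. This makes the argument a clean two-step reduction—$\sigma$-compact via Theorem~\ref{thm:closure}(f), then second countable via Kakutani--Kodaira and Theorem~\ref{thm:closure}(g)—with all the genuine work already done in Theorem~\ref{key lem} and Proposition~\ref{Es E}.

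\begin{proof}
Let $G\in\Es$. Since $\X$ is closed under directed unions of open subgroups (Theorem~\ref{thm:closure}(f)) and $G$ is the directed union of its compactly generated open subgroups, each of which lies in $\Es$ by the rank lemma, we may assume $G$ is compactly generated, hence $\sigma$-compact. By Kakutani--Kodaira (Theorem~\ref{KK}), there is a compact $K\trianglelefteq G$ with $G/K$ second countable. Proposition~\ref{Es E} gives $G/K\in\E$, and Theorem~\ref{key lem} gives $G/K\in\X$. As $K$ is compact, hence amenable, Theorem~\ref{thm:closure}(g) yields $G\in\X$.
\end{proof}
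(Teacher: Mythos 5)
Your proof is correct and follows essentially the same route as the paper: reduce to the compactly generated case via closure of $\X$ under directed unions of open subgroups (noting each such subgroup stays in $\Es$), then apply Kakutani--Kodaira, Proposition~\ref{Es E}, Theorem~\ref{key lem}, and the compact (hence amenable) normal subgroup clause of Theorem~\ref{thm:closure}. The exploratory detour in your first paragraph about testing amenability on $\sigma$-compact closed subgroups is unnecessary, as your final argument already recognizes.
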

\begin{proof}
Take $G \in \Es$. Every locally compact group is the directed union of its open compactly generated subgroups. Since $\X$ is closed under directed unions, we may assume $G$ is compactly generated and therefore $\sigma$-compact. Applying Theorem~\rm\ref{KK}, there is $K\trianglelefteq G$ such that $G/K$ is second countable. Proposition~\rm\ref{Es E} implies $G/K\in \E$, and by Theorem~\rm\ref{key lem}, we have that $G/K\in \X$. Since $K$ is compact, we deduce from Theorem~\rm\rm\ref{thm:closure} that $G\in \X$.
\end{proof} 

We conclude this section by noting three permanence properties of $\Es$.

\begin{thm}\label{thm:es_closure}
$\Es$ enjoys the following permanence properties:
\begin{enumerate}[(a)]
\item $\Es$ is closed under group extension.

\item If $G\in \Es$, then every \tdlc group admitting a  continuous, injective homomorphism into $G$ also belongs to $ \Es$. In particular, $\Es$ is closed under taking closed subgroups.

\item $\Es$ is closed under taking quotients by closed normal subgroups.
\end{enumerate}
\end{thm}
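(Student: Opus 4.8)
The plan is to prove the three properties by transfinite induction on construction rank, mirroring the structure of the proofs of Theorem~\ref{thm:closure_main} for $\E$ but working directly with the definition of $\Es$ rather than through second-countable reductions. For (a), closure under group extension, suppose $N \trianglelefteq G$ is closed with $N \in \Es$ and $G/N \in \Es$; I would induct on $\rk(G/N)$. If $\rk(G/N) = 0$ then $G/N$ is profinite or discrete and $G \in \Es$ directly by clause (2) of the definition. If $\rk(G/N) = \alpha+1$ is given by an extension $H \trianglelefteq G/N$ with $\rk(H) = \alpha$ and $G/N \to (G/N)/H$ profinite or discrete, pull $H$ back to $\widetilde H \trianglelefteq G$; then $\widetilde H$ is an extension of $H$ by $N$, so $\widetilde H \in \Es$ by the induction hypothesis, and $G/\widetilde H \simeq (G/N)/H$ is profinite or discrete, so $G \in \Es$. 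If $\rk(G/N) = \alpha+1$ is given by a directed union $G/N = \bigcup_{i \in I} O_i$ with each $O_i$ open and $\rk(O_i) \le \alpha$, pull each $O_i$ back to $\widetilde O_i \sleq_o G$; these form a directed system of open subgroups of $G$ with union $G$, each $\widetilde O_i$ is an extension of $O_i$ by $N$ hence in $\Es$ by the induction hypothesis, and so $G \in \Es$ by clause (3).

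For (c), closure under quotients, suppose $G \in \Es$ and $N \trianglelefteq G$ is closed; I would induct on $\rk(G)$, the case $\rk(G)=0$ being immediate since a quotient of a profinite group is profinite and a quotient of a discrete group is discrete. In the extension case $\rk(G) = \alpha+1$ with $H \trianglelefteq G$, $\rk(H) = \alpha$, and $G/H$ profinite or discrete: the image $HN/N \trianglelefteq G/N$ is a quotient of $H$ and $HN$ is closed (it contains the closed cocompact-or-open-image situation — more carefully, one checks $HN$ is closed using that $G/H$ is profinite or discrete, so $HN/H$ is closed in $G/H$), hence $HN/N \in \Es$ by the induction hypothesis; moreover $(G/N)/(HN/N) \simeq G/HN$ is a quotient of $G/H$, hence profinite or discrete, so $G/N \in \Es$. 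In the directed union case $G = \bigcup_{i\in I} O_i$ with $O_i \sleq_o G$ and $\rk(O_i) \le \alpha$: then $G/N = \bigcup_{i\in I} O_iN/N$, each $O_iN/N$ is open in $G/N$, and $O_iN/N \simeq O_i/(O_i \cap N)$ is a quotient of $O_i$ hence in $\Es$ by the induction hypothesis, so $G/N \in \Es$ by clause (3).

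For (b), suppose $\psi : L \to G$ is a continuous injective homomorphism of \tdlc groups with $G \in \Es$; I would again induct on $\rk(G)$. The base case uses that a \tdlc group injecting continuously into a profinite group is profinite (its image has a compact closure, and one identifies $L$ with a closed subgroup, or argues that $L$ is itself profinite since it is t.d. and admits a continuous injection into a compact group whose topology it therefore refines but, being \lc, must match up to the usual argument) and that a \tdlc group injecting into a discrete group is discrete. For the extension case $H \trianglelefteq G$: set $L_0 := \psi^{-1}(H) \trianglelefteq L$, which is closed; then $\psi$ restricts to a continuous injection $L_0 \to H$, so $L_0 \in \Es$ by the induction hypothesis, while $L/L_0$ admits a continuous injection into $G/H$, which is profinite or discrete — and here I use part (b) already proven at lower rank applied to the profinite/discrete group $G/H$ to conclude $L/L_0 \in \Es_0$ (this needs the base-case fact), so $L$ is an extension of a rank-$\le\alpha$ group by a profinite-or-discrete group and hence $L \in \Es$. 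For the directed union case $G = \bigcup_i O_i$: put $L_i := \psi^{-1}(O_i) \sleq_o L$; these are open, directed, cover $L$, and each injects continuously into $O_i$ hence lies in $\Es$ by induction, so $L \in \Es$.

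The main obstacle I anticipate is the same technical point that recurs in all three parts: verifying that the relevant product sets (such as $HN$ in part (c), or $HK$-type sets) are closed, and that the first isomorphism theorem for locally compact groups — which requires $\sigma$-compactness of one factor and closedness of the product — actually applies. Without second countability one cannot invoke the clean isomorphism $AH/H \simeq A/(A\cap H)$ freely, so at each step one must either reduce to the $\sigma$-compact case (replacing $G$ by an open compactly generated, hence $\sigma$-compact, subgroup, which is legitimate because all three target properties and $\Es$-membership are detected on directed unions of open subgroups) or argue closedness by hand using that the relevant quotient is profinite or discrete. I would handle this by first reducing, wherever a product-closedness issue arises, to $\sigma$-compact groups via their open compactly generated subgroups, so that Theorem~\ref{KK} and the first isomorphism theorem are available; the directed-union clause in the definition of $\Es$ makes this reduction cost nothing.
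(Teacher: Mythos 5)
Your plan is a direct transfinite induction on the construction rank, which is genuinely different from the paper's route (the paper reduces to compactly generated groups, passes to second countable quotients via Theorem~\ref{KK} and Proposition~\ref{Es E}, and then imports the known permanence properties of $\E$ from Theorem~\ref{thm:closure_main}). Part (a) of your argument is fine and is essentially the argument the paper cites. The problem is the extension case of part (c). You assert that $HN$ is closed ``using that $G/H$ is profinite or discrete.'' When $G/H$ is discrete this works ($H$ is open, so $HN$ is open hence closed), but when $G/H$ is profinite it fails: the image of a closed subgroup in a compact quotient need not be closed. Concretely, let $G=\Z_p\times\Z$ (second factor discrete), $H=\{0\}\times\Z$, and $N=\{(\iota(n),n):n\in\Z\}$ the graph of the inclusion $\iota\colon\Z\to\Z_p$. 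Then $H$ and $N$ are closed and normal, $G/H\simeq\Z_p$ is profinite, yet $HN=\iota(\Z)\times\Z$ is a proper dense subgroup of $G$, so it is not closed. Since this example is already second countable and $\sigma$-compact, your proposed remedy of reducing to $\sigma$-compact open subgroups does not address the obstruction: the difficulty is not the hypotheses of the first isomorphism theorem but that the image of $H$ in $G/N$ is only a \emph{dense} normal subgroup of $\overline{HN}/N$. To complete your induction you would need a ``dense normal image'' permanence property for $\Es$, which is precisely the content of Lemma~\ref{quot lem} / Theorem~\ref{thm:closure_main}(d) and is not available by a naive rank induction. The paper avoids this by only ever forming products with a \emph{compact} normal subgroup $K$ (so $LK$ is automatically closed) and by delegating the dense-image and quotient issues to the established theory of $\E$.

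A secondary gap is the base case of (b): it is not true that a \tdlc group admitting a continuous injection into a profinite group is profinite (or lies in $\Es_0$), nor can you identify it with a closed subgroup; for instance $\Z_p\times\Z$ injects continuously into the profinite group $\Z_p\times\prod_q\Z_q$ but is neither profinite nor discrete, and its image is not closed. What is true---and is the device the paper actually uses---is that such a group is residually discrete (pull back the open kernels of the maps to finite quotients), hence a directed union of compactly generated SIN groups by Theorem~\ref{CM_SIN}, hence in $\Es$, though possibly of positive rank. With that correction your extension step in (b) should conclude via part (a) that $L$ is an extension of one $\Es$-group by another, rather than an extension by an $\Es_0$-group; the directed-union cases of (b) and (c) are fine modulo these repairs.
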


\begin{proof}
Claim $(a)$ follows just in the case of $\E$; see the proof of \cite[Proposition 3.5]{W_1_14}.

\medskip \noindent
For $(b)$, let $G\in \Es$, let $H$ be a \tdlc group, and let $\psi \colon H \to G$ be a continuous, injective homomorphism.  Every locally compact group is the directed union of its compactly generated open subgroups. Since $\Es$ is closed under directed unions, we may therefore assume $H$ is compactly generated. Therefore, $P := \overline{\psi(H)}$ is also compactly generated. 

Since $H$ is $\sigma$-compact, it has a compact normal subgroup $K$ such that $H/K$ is second countable; likewise, $P$ admits a compact normal subgroup $L\normal P$ so that $P/L$ is second countable. The subgroup $L\psi(K)$ is then a compact normal subgroup of $P$ so that $P/L\psi(K)$ is second countable. The group $H/\psi^{-1}(L)K$ is also second countable, and $\psi$ induces an injective, continuous homomorphism $\tilde{\psi}:H/\psi^{-1}(L)K\rightarrow P/L\psi(K)$.  Applying Proposition~\ref{Es E}, we conclude $P/L\psi(K) \in \E$ and, via Theorem~\rm\ref{thm:closure_main}, $H/\psi^{-1}(L)K \in \E$. 

On the other hand, we have an injection $\psi^{-1}(L)K\rightarrow \psi(K)L$ with the latter group residually discrete. The group $\psi^{-1}(L)K$ is then residually discrete, and it follows from Theorem~\rm\ref{CM_SIN} that $\psi^{-1}(L)K\in \Es$. In view of part $(a)$, we deduce $H \in \Es$ as desired. 

\medskip \noindent
For $(c)$, suppose $L\trianglelefteq G$ is closed; as above, it suffices to consider the case that $G$ is compactly generated. In view of Proposition~\rm\ref{Es E}, there is $K\trianglelefteq G$ compact such that $G/K\in \E$. The group $LK/K$ is a closed normal subgroup of $G/K$, so Theorem~\rm\ref{thm:closure_main} implies 
\[
(G/L)/(LK/L)\simeq G/LK \simeq (G/K)/(LK/K) \in \E
\]
On the other hand, $LK/L\simeq K/(K\cap L)$ with the latter group compact. We conclude $G/L$ is compact-by-$\E$ and, via $(a)$, belongs to $\Es$.
\end{proof}

\section{The class $\Y$}

In order to prove Theorem~\rm\ref{thm:Y}, we introduce the class  $\Ys$  consisting of those locally compact groups $G$ such that $G/G^\circ \in \Es$ where $G^{\circ}$ is the connected component of the identity. By definition, we have that $\Es \subseteq \Y$, hence $\Ys \subseteq \Y$ since $\Y$ is stable under group extensions and contains all connected \lc groups. We will eventually show that $\Ys = \Y$. The proof of this relies on the following. 

\begin{prop}\label{prop:Ys}
The class $\Ys$ enjoys the following permanence properties:
\begin{enumerate}[(a)]
\item $\Ys$ is closed under directed unions of open subgroups. 

\item $\Ys$ is closed under taking closed subgroups. 

\item $\Ys$ is closed under taking group extensions. 

\item $\Ys$ is closed under taking quotients by closed normal subgroups. 
\end{enumerate}
\end{prop}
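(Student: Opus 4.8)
The plan rests on one elementary fact about identity components, used repeatedly: for a locally compact group $G$ and a closed normal subgroup $N\normal G$ one has $(G/N)^\circ=\overline{G^\circ N}/N$. Indeed $\overline{G^\circ N}/N$ is the closure of the image of the connected set $G^\circ$, hence connected, so it lies in $(G/N)^\circ$; conversely $G/\overline{G^\circ N}$ is a quotient of the totally disconnected group $G/G^\circ$, hence totally disconnected, which forces $(G/N)^\circ\subseteq\overline{G^\circ N}/N$. In particular, if $G/N$ is connected then $G^\circ N$ is \emph{dense} in $G$; but it need not equal $G$ (take $G=\mathbb{R}\times\widehat{\mathbb{Z}}$ and $N=\mathbb{Z}$ embedded diagonally, so $G/N$ is the solenoid while $G/G^\circ=\widehat{\mathbb{Z}}$). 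We also use that an open subgroup $O\sleq_o G$ has $O^\circ=G^\circ$, and that $\Ys=\{G:\ G/G^\circ\in\Es\}$ by definition.

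Part $(a)$ is immediate: if $G=\bigcup_{i\in I}O_i$ with the $O_i$ open and in $\Ys$, then each $O_i^\circ=G^\circ$, so $G/G^\circ$ is the directed union of the open subgroups $O_iG^\circ/G^\circ\in\Es$, hence lies in $\Es$. For part $(b)$, let $H\sleq G$ be closed with $G\in\Ys$. The group $H/(H\cap G^\circ)$ is \tdlc and injects continuously into $G/G^\circ\in\Es$, so it lies in $\Es$ by Theorem~\ref{thm:es_closure}$(b)$. Since $H^\circ\normal H\cap G^\circ\normal H$, Theorem~\ref{thm:es_closure}$(a)$ reduces matters to showing $(H\cap G^\circ)/H^\circ\in\Es$; i.e.\ it suffices to check that a closed subgroup $D$ of a connected locally compact group $C$ has $D/D^\circ\in\Es$. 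By the Gleason--Yamabe theorem fix a compact $K\normal C$ with $C/K$ a (second countable) Lie group. Then $DK/K$ is a closed subgroup of a second countable Lie group, hence a Lie group with countably many components, so $(DK/K)/(DK/K)^\circ$ is countable discrete. Letting $D_1\normal D$ be the preimage of $(DK/K)^\circ$, we get $D/D_1\cong(DK/K)/(DK/K)^\circ$ countable discrete, while $D_1$ is an extension of the connected group $(DK/K)^\circ$ by the compact group $D\cap K$. Applying the displayed fact (and using that $D\cap K$ is compact, so $D_1^\circ(D\cap K)$ is closed) gives $D_1=D_1^\circ(D\cap K)$, whence $D_1/D_1^\circ$ is a quotient of $D\cap K$, so profinite; as $D_1^\circ=D^\circ$, the group $D/D^\circ$ is profinite-by-(countable discrete), hence in $\Es$. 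Feeding this back through Theorem~\ref{thm:es_closure}$(a)$ gives $H/H^\circ\in\Es$.

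For part $(d)$, let $L\normal G$ be closed with $G\in\Ys$. By $(a)$, $(b)$, and the isomorphism theorem we may assume $G$ is compactly generated, so $G/G^\circ$ is $\sigma$-compact. The composite $G\to G/L\to(G/L)/(G/L)^\circ$ kills $L$ and $G^\circ$, inducing a continuous surjection $G/G^\circ\to(G/L)/(G/L)^\circ$ of locally compact groups; since the source is $\sigma$-compact, the open mapping theorem shows this map is open. Thus $(G/L)/(G/L)^\circ$ is a Hausdorff quotient of $G/G^\circ\in\Es$, so it lies in $\Es$ by Theorem~\ref{thm:es_closure}$(c)$; that is, $G/L\in\Ys$.

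Part $(c)$ is the main obstacle. Given $1\to N\to G\to Q\to 1$ with $N,Q\in\Ys$ and quotient map $\pi\colon G\to Q$, put $M:=\pi^{-1}(Q^\circ)$; then $M^\circ=G^\circ$ and $G/M\cong Q/Q^\circ\in\Es$, so by Theorem~\ref{thm:es_closure}$(a)$ it is enough to prove $M/M^\circ\in\Es$. Passing to $M/N^\circ$ (legitimate, since $(M/N^\circ)/(M^\circ/N^\circ)\cong M/M^\circ$ and $N/N^\circ\in\Es$) reduces everything to the statement: \emph{if $1\to E\to M\to C\to 1$ with $E\in\Es$ and $C$ connected locally compact, then $M/M^\circ\in\Es$}. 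Using $(a)$ we may assume $M$ is compactly generated, because any compactly generated open $O\sleq_o M$ has $OE=M$ (its image in $C$ is an open subgroup of the connected group $C$, hence all of $C$), so $O$ is again an extension of $C$ by the $\Es$-group $O\cap E$, and $M=\bigcup O$. Now $M$ is $\sigma$-compact, so by Kakutani--Kodaira there is a compact $Q_0\normal M$ with $M/Q_0$ second countable, and $M/Q_0$ is again an extension of a connected group by an $\Es$-group; since $(M/Q_0)^\circ=M^\circ Q_0/Q_0$, the group $M/M^\circ$ is an extension of $(M/Q_0)/(M/Q_0)^\circ$ by a profinite group, so by Theorem~\ref{thm:es_closure}$(a)$ we may assume $M$ is second countable, whence $E\in\E$. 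By the displayed fact, $EM^\circ/M^\circ$ is then a dense normal subgroup of the \tdlcsc group $M/M^\circ$ isomorphic to the quotient $E/(E\cap M^\circ)\in\E$, and Theorem~\ref{thm:closure_main}$(d)$ yields $M/M^\circ\in\E\subseteq\Es$. The crux — and the reason Theorem~\ref{thm:closure_main}$(d)$, together with the reduction to the second countable case, is essential — is precisely that $EM^\circ$ is only dense in $M$, so one cannot simply write $M$ as a product of a connected group and an $\Es$-group.
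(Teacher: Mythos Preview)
Your proof is correct. The overall strategy matches the paper's: in (c) both you and the paper identify the key intermediate subgroup $M=\pi^{-1}(Q^\circ)=\overline{NG^\circ}$, observe $G/M\cong (G/N)/(G/N)^\circ\in\Es$, and then reduce the hard step---showing $M/G^\circ\in\Es$---to the second countable case in order to invoke Theorem~\ref{thm:closure_main}(d) on a dense normal $\E$-subgroup. The tactical packaging differs: the paper isolates this last step as a stand-alone Lemma~\ref{quot lem} (for $\sigma$-compact $G$, closed normal $P\in\Es$, and $L\geq G^\circ$, one has $\overline{PL}/L\in\Es$), whereas you inline the special case you need after first passing to $M/N^\circ$ to obtain a clean $\Es$-by-connected extension and using the observation $OE=M$ for compactly generated open $O$. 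For (b), the paper varies the compact $K\trianglelefteq G^\circ$ to exhibit $(H\cap G^\circ)/H^\circ$ as residually discrete, hence a directed union of SIN groups; you instead fix a single $K$ and read off directly that $D/D^\circ$ is profinite-by-(countable discrete). Both routes work; the paper's formulation of Lemma~\ref{quot lem} is slightly more reusable, while your argument for (b) is a touch more explicit. Your extra reduction to the compactly generated case in (d) is unnecessary (the quotient $G/\overline{G^\circ L}$ is already a topological quotient of $G/G^\circ$ by the closed normal subgroup $\overline{G^\circ L}/G^\circ$), but harmless.
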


The proof of (c)  requires the following subsidiary fact. 

\begin{lem}\label{quot lem}
Let $G$ be a $\sigma$-compact \lc~group with closed $L,P\trianglelefteq G$ so that $L \geqslant G^\circ$.  If $P\in \Es$, then $\overline{PL}/L\in \Es$.
\end{lem}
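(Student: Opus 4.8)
The plan is to reduce to a situation where Proposition~\ref{Es E} and the permanence properties of $\Es$ (Theorem~\ref{thm:es_closure}) apply directly. First I would observe that $\ol{PL}/L$ is a \tdlc group: since $L \sgeq G^\circ$, the quotient $G/L$ is totally disconnected, and $\ol{PL}/L$ is a closed subgroup of it. So the target class $\Es$ is at least the appropriate one. Because $\Es$ is closed under directed unions of open subgroups, and every \lc group is the directed union of its compactly generated open subgroups, I may pass to an open compactly generated subgroup and thereby assume $\ol{PL}/L$ is compactly generated; lifting, this lets me replace $G$ by an open subgroup and assume $G$ itself is compactly generated, hence $\sigma$-compact (this uses that open subgroups of $\sigma$-compact groups are $\sigma$-compact, and that the hypotheses $L, P \normal G$, $L \sgeq G^\circ$, $P \in \Es$ persist after intersecting with an open subgroup, using Theorem~\ref{thm:es_closure}(b) for the last).

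Next, apply Theorem~\ref{KK} (Kakutani--Kodaira) to $G/G^\circ$, which is a $\sigma$-compact \tdlc group: there is a compact $K/G^\circ \normal G/G^\circ$ with $(G/G^\circ)/(K/G^\circ) \simeq G/K$ second countable. Then I want to push everything through the quotient map $G \to G/K$. The image of $P$ in $G/K$ is $PK/K \in \Es$ by Theorem~\ref{thm:es_closure}(c), and $G/K$ is a \tdlcsc group, so by Proposition~\ref{Es E} applied with the trivial compact subgroup — or more simply, since $G/K$ is already second countable — $PK/K$ lies in $\E$. Actually the cleaner route: $\ol{PL}K/K$ is the closure of the product of the image of $P$ (which is in $\E$ by the above together with Theorem~\ref{thm:closure_main}(c), intersecting with a compactly generated piece if needed) with the image of $L$; and $\ol{PL}K/L K \simeq \ol{PLK/K}\big/\ol{LK/K}$ — I should be careful that products of closed subgroups need not be closed, which is exactly why the $\sigma$-compactness reduction and the first isomorphism theorem for \lc groups (quoted in Section~2) are needed, to identify $\ol{PL}/L$ with a genuine quotient.

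The cleanest line of argument, then, is: reduce to $G$ compactly generated; apply Kakutani--Kodaira to get compact $K \normal G$ with $G/K$ \tdlcsc; note $PK/K \in \Es$ is \tdlcsc hence in $\E$ by Proposition~\ref{Es E}; then $\ol{(PK/K)(LK/K)} \big/ (LK/K)$ is a quotient of (a closed subgroup of) something in $\E$, hence in $\E \subseteq \Es$ by Theorem~\ref{thm:closure_main}; finally lift back by observing $\ol{PL}/L$ is $(\ol{PL}K/K \big/ LK/K)$-by-(compact), since the kernel of $\ol{PL}/L \to \ol{PL}K/LK$ is a quotient of $K$, hence compact, so Theorem~\ref{thm:es_closure}(a) gives $\ol{PL}/L \in \Es$. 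The main obstacle I expect is the bookkeeping around non-closedness of products: one must repeatedly invoke $\sigma$-compactness and the \lc first isomorphism theorem to guarantee that the various sets $\ol{PL}$, $PK$, $LK$, $\ol{PL}K$ are closed or that the relevant quotient maps are topological isomorphisms, and to ensure the compactness of the relevant kernels. Everything else is a routine assembly of the already-established permanence properties of $\Es$ and $\E$.
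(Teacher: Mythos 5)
Your outline circles the right tools (Kakutani--Kodaira, Proposition~\ref{Es E}, a compact kernel at the end), but it breaks down at exactly the point that makes this lemma nontrivial: the image of $P$ in the relevant quotients is \emph{dense but not closed}, and no amount of $\sigma$-compactness or appeal to the first isomorphism theorem can fix that --- closedness of the product is a \emph{hypothesis} of that theorem, not a conclusion. Concretely: (i) ``$PK/K\in\Es$ by Theorem~\ref{thm:es_closure}(c)'' is unjustified, since $PK$ need not be closed in $G$; part (c) only gives $P/(P\cap K)\in\Es$, which is not topologically the subgroup $PK/K$ of $G/K$, and moreover $P\cap K$ need not be compact (already $G=\mathbb{R}$, $L=\mathbb{R}$, $P=\mathbb{Z}$ forces $K=G$ and $P\cap K=\mathbb{Z}$), so Proposition~\ref{Es E} does not apply as you use it. (ii) The central assertion that $\overline{(PK/K)(LK/K)}\big/(LK/K)$ ``is a quotient of (a closed subgroup of) something in $\E$'' is false in general: the natural map from $P$ into this group has dense, typically non-closed, image, so closed-subgroup and quotient permanence are not enough. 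The missing idea is Theorem~\ref{thm:closure_main}(d), the permanence of $\E$ under continuous injective homomorphisms with \emph{dense normal image}; this is precisely how the paper's proof proceeds: replace $P$ by $P/(P\cap L)$ so that $\pi\colon P\to H:=\overline{PL}/L$ is injective with dense normal image, apply Kakutani--Kodaira separately to $P$ and to $H$ (both are t.d.\ and $\sigma$-compact), pass to the induced injective map with dense normal image between second countable quotients, put the source in $\E$ via Proposition~\ref{Es E}, transfer to the target via Theorem~\ref{thm:closure_main}(d), and finish by a compact extension.

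Two further slips in your ``cleanest line'': Kakutani--Kodaira (Theorem~\ref{KK}) applies to t.d.l.c.\ groups, and $G$ here need not be totally disconnected, so there is in general no \emph{compact} $K\trianglelefteq G$ with $G/K$ t.d.s.c.; what your earlier paragraph correctly produces is $K\geqslant G^\circ$ with $K/G^\circ$ compact. With that $K$, the kernel of $\overline{PL}/L\to \overline{PL}K/LK$ is not ``a quotient of $K$, hence compact'' as stated; its compactness has to come from $K/G^\circ$ being compact together with $G^\circ\leqslant L$. These are repairable, but the gap in (ii) is not repairable without invoking the dense-image property, so as written the proof does not go through.
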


\begin{proof} Set $H:=\overline{PL}/L$ and let $\pi:P\rightarrow H$ be the restriction of the usual projection. The image of $\pi$ is a dense normal subgroup of $H$. Theorem~\ref{thm:es_closure} ensures $\Es$ is closed under taking quotients, so we may assume $\pi$ is injective. 
	
As $G$ is $\sigma$-compact, $P$ is $\sigma$-compact, and additionally, $H$ is $\sigma$ compact since $P$ has dense image in $H$. We may thus find $K\normal P$ and $R\normal H$ compact subgroups so that $P/K$ and $H/R$ are second countable. The subgroup $\pi(K)$ is then a compact normal subgroup of $H$ with $H/\pi(K)R$ second countable. 

The induced map $\tilde{\pi}:P/K\pi^{-1}(R)\rightarrow H/\pi(K)R$ is an injective, continuous homomorphism with dense normal image. Appealing to Proposition~\ref{Es E}, we have $P/K\pi^{-1}(R)\in \E$. Theorem~\ref{thm:closure_main} thus implies $H/\pi(K)R\in \E$, and as $\Es$ is closed under group extensions, we conclude that $H\in \Es$, verifying the lemma.
\end{proof}

\begin{proof}[Proof of Proposition~\rm\ref{prop:Ys}]
(a) Let $G$ be the directed union of $(O_i)_{i\in I}$ with $O_i\sleq _oG$ and $O_i \in \Ys$ for each $i\in I$. For each $i$, we have $G^\circ = O_i^\circ\sleq O_i$ since $O_i$ is open. So $G/G^\circ=\bigcup_{i\in I}O_i/O_i^\circ$, and therefore, $G/G^\circ \in \Es$.

\medskip \noindent
(b) Given $H \sleq G$, we have that $H/H\cap G^{\circ}$ embeds continuously into $G/G^{\circ}$. By Theorem~\rm\ref{thm:es_closure}, this implies $H/H\cap G^{\circ} \in \Es$. On the other hand, $ H\cap G^\circ\sleq G^\circ$ with $G^\circ$ a connected locally compact group. By the solution to Hilbert's fifth problem, $G^\circ$ is pro-Lie. Let $K\trianglelefteq G^\circ$ be compact such that $G^\circ/K$ is a Lie group. In view of Cartan's theorem, see \cite[LG 5.42]{Se64}, $(H\cap G^\circ)K/K$ is a Lie group. Putting $J:=K\cap H\cap G^\circ$, we have
\[
H^\circ J/J=\left(H\cap G^\circ /J\right)^\circ \sleq_o(H\cap G^\circ)/J,
\]
so $(H\cap G^\circ)/ H^\circ J$ is discrete. Since we may find such $J$ inside arbitrarily small neighborhoods of $1$, the group $(H\cap G^\circ)/ H^\circ$ is residually discrete. Via Theorem~\rm\ref{CM_SIN}, we infer $(H\cap G^\circ)/H^{\circ}$ is a directed union of SIN groups and so in $\Es$. Since $\Es$ is closed under group extension, $H/H^\circ\in \Es$, whereby $H \in \Ys$ as desired.

\medskip \noindent
(c) Let $H \trianglelefteq G$ be such that $H$ and $G/H$ both belong to $\Ys$. To show $G \in \Ys$, we may reduce to $G$ compactly generated. Indeed, if $O\sleq_oG $ is compactly generated, then
\[
H^\circ=(O\cap H)^\circ\sleq O\cap H\trianglelefteq O
\]
and $(O\cap H)/H^\circ\sleq_o H/H^\circ$. We thus deduce that $(O\cap H)/H^\circ\in \Es$, whereby $O\cap H \in \Ys$. Similarly, 
$O/(O\cap H)\in \Ys$. The group $O$ is thus a group extension of groups in $\Ys$. Therefore, if the claim holds for all compactly generated groups, then $O\in \Ys$, and since $\Ys$ is closed under directed unions, $G\in \Ys$. 

Suppose $G$ is compactly generated, put $\tilde{G}:=G/(H\cap G^\circ)$, and let $\pi:G\rightarrow \tilde{G}$ be the usual projection. Since $H^\circ\sleq H\cap G^\circ$, $\tilde{H}:=\pi(H)$ is a quotient of $H/H^\circ$ and, via Theorem~\rm\ref{thm:es_closure}, a member of $\Es$. Setting $\tilde{G^\circ}:=\pi(G^\circ)$, we apply Lemma~\rm\ref{quot lem} to conclude $\overline{\tilde{H}\tilde{G^\circ}}/\tilde{G^\circ}\in \Es$. Since 
\[
\overline{\tilde{H}\tilde{G^\circ}}/{\tilde G^\circ}\simeq \overline{HG^\circ}/G^\circ,
\]
we indeed have that $\overline{HG^\circ}/G^\circ\in \Es$.  

The connected component of any locally compact group coincides with the intersection of all its open subgroups. This implies $\overline{HG^\circ}/H=(G/H)^\circ$, so 
\[
(G/G^\circ)/(\overline{HG^\circ}/G^\circ)\simeq G/\overline{HG^\circ} \simeq (G/H)/(\overline{HG^\circ}/H)\in \Es.
\]
We conclude $G/G^\circ$ is an extension of a group in $\Es$ by another group in $\Es$ and thus a member of $\Es$. Therefore, $G\in \Ys$.

\medskip \noindent
(d) Let $G \in \Ys$ and $N \trianglelefteq G$. As noticed above, we have $\overline{NG^\circ}/N = (G/N)^\circ$. Therefore, in order to show that $G/N \in \Ys$, it suffices to show that $G/\overline{G^\circ N} \in \Es$. The latter is isomorphic to a quotient of $G/G^\circ  \in \Es$, so the desired conclusion follows from Theorem~\rm\ref{thm:es_closure}.
\end{proof}

\begin{cor} \label{cor:Y=Ys}
$\Y = \Ys \subseteq \X$.
\end{cor}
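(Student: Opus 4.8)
The plan is to establish the two halves of the statement in turn, each being a short deduction from the permanence results already in hand.

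First I would prove $\Y=\Ys$. The inclusion $\Ys\subseteq\Y$ was noted right after the definition of $\Ys$, so the content is the reverse inclusion $\Y\subseteq\Ys$. Since $\Y$ is by construction the smallest class of locally compact groups that contains all compact, discrete, and connected groups and is closed under group extensions and under directed unions of open subgroups, it suffices to check that $\Ys$ enjoys these same closure properties and contains those three families of generators. Closure of $\Ys$ under group extensions and under directed unions of open subgroups is precisely parts~(c) and~(a) of Proposition~\ref{prop:Ys}. For the generators I would simply inspect $G/G^\circ$: when $G$ is compact it is profinite, when $G$ is discrete it equals $G$ and is discrete, and when $G$ is connected it is trivial; in every case $G/G^\circ\in\Es$, so $G\in\Ys$. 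Minimality of $\Y$ then forces $\Y\subseteq\Ys$, hence $\Y=\Ys$.

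Next I would prove $\Ys\subseteq\X$. Given $G\in\Ys$, the identity component $G^\circ$ is a closed connected normal subgroup of $G$ and $G/G^\circ\in\Es$. By Theorem~\ref{thm:es_x} we have $\Es\subseteq\X$, so $G/G^\circ\in\X$. Applying the connected-quotient clause of Theorem~\ref{thm:closure}, namely that $G\in\X\Longleftrightarrow G/N\in\X$ whenever $N\trianglelefteq G$ is connected, with $N=G^\circ$, we conclude $G\in\X$.

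I do not expect a genuine obstacle: both halves are bookkeeping over results already proved, and the only step that is not completely formal --- that $\Ys$ is closed under directed unions of open subgroups --- is discharged by Proposition~\ref{prop:Ys}(a), whose proof already rests on the key fact that an open subgroup $O\sleq_o G$ satisfies $O^\circ=G^\circ$. I would also remark, as a forward pointer, that once $\Y=\Ys$ is in place the parts~(b) and~(d) of Proposition~\ref{prop:Ys} immediately yield the permanence properties claimed in Theorem~\ref{thm:Y}, although that lies just beyond the corollary itself.
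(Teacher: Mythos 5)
Your proof is correct and follows essentially the same route as the paper: $\Ys\subseteq\Y$ by the earlier observation, $\Y\subseteq\Ys$ by the minimality of $\Y$ together with Proposition~\ref{prop:Ys}(a),(c), and then $\Ys\subseteq\X$ via $\Es\subseteq\X$ and the connected-normal-subgroup clause of Theorem~\ref{thm:closure}. Your explicit check that compact, discrete, and connected groups lie in $\Ys$ is a detail the paper leaves implicit, but it is the same argument.
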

\begin{proof} 
We have already observed that $\Ys \subseteq \Y$. Since $\Ys$ is stable under group extensions and directed unions of open subgroups by Proposition~\rm\ref{prop:Ys}, the reverse inclusion follows. Taking $G\in \Ys$, we see $G/G^{\circ}\in \Es\subseteq \X$ and, via Theorem~\rm\ref{thm:closure}, $G\in \X$. 
\end{proof}

\begin{proof}[Proof of Theorem~\rm\ref{thm:Y}]
We have $\Y \subseteq \X$ by Corollary~\rm\ref{cor:Y=Ys}. The permanence properties of $\Y$ follow from Proposition~\rm\ref{prop:Ys}.
\end{proof}

\begin{rmk}\label{rem:counterexample}
The results of this note give new information concerning locally compact groups $G\notin\X$. As remarked in \cite{CM14}, we may take $G\notin \X$ to be a compactly generated \tdlc group. Applying Theorem~\rm\ref{KK}, we have a compact $K\trianglelefteq G$ such that $G/K$ is second countable. Theorem~\rm\ref{thm:closure} implies $G/K$ must also lie outside of $\X$. We may thus take $G\notin \X$ to be a compactly generated \tdlcsc group. 

By \cite[Theorem 7.8]{W_1_14}, a \tdlcsc group $G$ admits a unique maximal closed normal elementary subgroup, denoted $\Rad_{\E}(G)$ and called the \textbf{elementary radical} of $G$. Suppose $G$ is a \tdlcsc group outside of $\X$ and fix $U\in \U(G)$. By Theorem~\rm\ref{key lem}, $\Rad_{\E}(G)\in \X$, and since $U\Rad_{\E}(G)\in \E$, we further have $U\Rad_{\E}(G)\in \X$. In view of Lemma~\rm\ref{lem:grp_ext}, it must be the case $G/\Rad_{\E}(G)\notin \X$. We may thus suppose $G\notin \X$ has trivial elementary radical, and via \cite[Corollary 9.12]{W_1_14}, $G$ is $[A]$-semisimple. The definition and a discussion of $[A]$-semisimple groups may be found in \cite{CRW_1_13}. Here we merely recall that $[A]$-semisimple groups have a canonical action on a lattice and, in many cases, a non-trivial boolean algebra \cite{CRW_1_13}. 

\end{rmk}
\bibliography{biblio1}
\bibliographystyle{bibgen}
\end{document}